\documentclass[a4paper,12pt]{article}
\usepackage{latexsym,amsmath,amsthm,amssymb}
\usepackage{a4wide}
\usepackage{hyperref}
\usepackage{marginnote}
\usepackage{color}
\usepackage{cite}
\hypersetup{
pdftitle={MT inequality}   
pdfauthor={Van Hoang Nguyen},
colorlinks = true,
linkcolor = magenta,
citecolor = blue,
}

\theoremstyle{plain}
\newtheorem{theorem}{Theorem}[section]



\newtheorem{proposition}[theorem]{Proposition}
\newtheorem{lemma}[theorem]{Lemma}

\theoremstyle{definition} 
\newtheorem{definition}[theorem]{Definition}

\theoremstyle{remark}



\renewcommand{\thefootnote}{\arabic{footnote}}

\def\R{\mathbb R}


\def\al{\alpha}
\def\om{\omega}
\def\Om{\Omega}
\def\be{\beta}
\def\de{\delta}
\def\De{\Delta} 

\def\lam{\lambda}
\def\ep{\epsilon}
\def\na{\nabla}
\def\pa{\partial}
\def\la{\langle} 
\def\ra{\rangle} 
\def\lt{\left}
\def\rt{\right}


\numberwithin{equation}{section}


\title{Extremal functions for the sharp Moser--Trudinger type inequalities in whole space $\mathbb R^N$}
\author{Van Hoang Nguyen\footnote{
Institut de Math\'ematiques de Toulouse, Universit\'e Paul Sabatier, 118 Route de Narbonne, 31062 Toulouse c\'edex 09, France.}
}

\begin{document}
\maketitle


\renewcommand{\thefootnote}{}

\footnote{Email: \href{mailto: Van Hoang Nguyen <van-hoang.nguyen@math.univ-toulouse.fr>}{van-hoang.nguyen@math.univ-toulouse.fr}}

\footnote{2010 \emph{Mathematics Subject Classification\text}: 46E35, 26D10.}

\footnote{\emph{Key words and phrases\text}: sharp Moser--Trudinger type inequality, normalized vanishing limit, normalized concentrating limit, maximizers.}

\renewcommand{\thefootnote}{\arabic{footnote}}
\setcounter{footnote}{0}

\begin{abstract}
This paper is devoted to study the sharp Moser--Trudinger type inequalities in whole space $\mathbb R^N$, $N \geq 2$ in more general case. We first compute explicitly the \emph{normalized vanishing limit} and the \emph{normalized concentrating limit} of the Moser--Trudinger type functional associated with our inequalities over all the \emph{normalized vanishing sequences} and the \emph{normalized concentrating sequences}, respectively. Exploiting these limits together with the concentration--compactness principle of Lions type, we give a proof of the exitence of maximizers for these Moser--Trudinger type inequalities. Our approach gives an alternative proof of the existence of maximizers for the Moser--Trudinger inequality and singular Moser--Trudinger inequality in whole space $\R^N$ due to Li and Ruf \cite{LiRuf2008} and Li and Yang \cite{LiYang}.
\end{abstract}

\section{Introduction}
Let $\Omega \subset \R^N$ $(N\geq 2)$ be a bounded smooth domain of $\R^N$ and $W_0^{1,N}(\Om)$ be the usual Sobolev space on $\Omega$, i.e., the completion of $C_0^\infty(\Om)$ under the Dirichlet norm $\lt(\int_\Om |\na u|^N dx\rt)^{\frac1N}$ here $\na u$ denoting the usual gradient of $u$. The famous Moser--Trudinger inequality \cite{Y1961,P1965,M1970,T1967} asserts that
\begin{equation}\label{eq:MTbounded}
\sup_{u\in W^{1,N}_0(\Om),\, \int_\Om |\na u|^N dx} \int_\Om e^{\alpha |u|^{\frac N{N-1}}} dx <\infty,
\end{equation}
for any $\alpha \leq \alpha_N :=N\omega_{N-1}^{\frac1{N-1}}$ where $\om_{N-1}$ denotes the area of unit sphere in $\R^N$. The inequality \eqref{eq:MTbounded} is sharp in sense that the supremum in \eqref{eq:MTbounded} will be infinite if $\al > \alpha_n$ although for each $u\in W_0^{1,N}(\Om)$ the integral is still finite. 

Using a rearrangement argument and a change of variables, Adimurthi and Sandeep \cite{AS07} generalized \eqref{eq:MTbounded} to a singular version as follows
\begin{equation}\label{eq:MTsingular}
\sup_{u\in W^{1,N}_0(\Om),\, \int_\Om |\na u|^N dx} \int_\Om |x|^{-N\beta} e^{\alpha |u|^{\frac N{N-1}}} dx <\infty,
\end{equation}
for any $\beta \in [0,1)$ and $\alpha \leq \alpha_{\beta,N} := (1-\beta) \al_N$. The inequality \eqref{eq:MTsingular} is sharp in the sense that the supremum in \eqref{eq:MTsingular} is infinite if $\al > \al_{\beta,N}$.

The Moser--Trudinger inequality was extended to entire space $\R^N$ by the pioneer works of Cao \cite{Cao}, do \'O \cite{doO97}, Ruf \cite{Ruf2005} and Li and Ruf \cite{LiRuf2008}. It was proved by Li and Ruf \cite{Ruf2005,LiRuf2008} that
\begin{equation}\label{eq:MTLiRuf}
\sup_{u\in W^{1,N}(\R^N),\, \int_{\R^N}(|\na u|^N + |u|^N) dx\leq 1} \int_{\R^N}\Phi_N(\al |u|^{\frac N{N-1}}) dx < \infty,
\end{equation}
for any $\al \leq \al_N$, where
\[
\Phi_N(t) = e^t -\sum_{k=0}^{N-2} \frac{t^k}{k!}.
\]
The inequality \eqref{eq:MTLiRuf} is sharp in the sense that the supremum in \eqref{eq:MTLiRuf} is infinite if $\al > \al_{N}$. The singular version of \eqref{eq:MTLiRuf} was later proved by Adimurthi and Yang \cite{AY10}, namely
\begin{equation}\label{eq:MTAY}
\sup_{u\in W^{1,N}(\R^N),\, \int_{\R^N}(|\na u|^N + |u|^N) dx\leq 1} \int_{\R^N}|x|^{-N\beta}\Phi_N(\al |u|^{\frac N{N-1}}) dx < \infty,
\end{equation}
for any $\beta\in [0,1)$ and $\al \leq \al_{\beta,N}$. Also, The inequality \eqref{eq:MTAY} is sharp in the sense that the supremum in \eqref{eq:MTAY} is infinite if $\al > \al_{\beta,N}$. When $\beta =0$ \eqref{eq:MTsingular} and \eqref{eq:MTAY} reduce to \eqref{eq:MTbounded} and \eqref{eq:MTLiRuf} respectively. It should be remarked here that the proof of \eqref{eq:MTAY} in \cite{AY10} is essentially based on the Young inequality while the proof of \eqref{eq:MTLiRuf} is based on the method of blow-up analysis. A new and simpler proof of \eqref{eq:MTLiRuf} and \eqref{eq:MTAY} was given in \cite{LamLu}. This new proof can be applied to obtain the sharp Moser--Trudinger inequality in entire Heisenberg group \cite{LamLuhei}. Such kind of singular Moser--Trudinger inequality is very important in analysis of partial differential equations, for examples, see \cite{Yang12,Yang12a,Yang12b}. We refer the interest reader to \cite{AD2004,doO2014*,doO2015,doO2016,LuZhu,Nguyen2017,Tintarev} for recent developments of \eqref{eq:MTbounded} and \eqref{eq:MTLiRuf}.

An interesting problem related to the Moser--Trudinger inequality is whether or not extremal functions exist. Existence of extremal functions for the Moser--Trudinger inequality \eqref{eq:MTbounded} was proved by Carleson and Chang \cite{CC1986} when $\Om$ is the unit ball, by Struwe \cite{Struwe} when $\Om$ is close to the ball in the sense of measure, by Flucher \cite{Flucher1992} and Lin \cite{Lin1996} when $\Om$ is a general smooth bounded domain, and by Li \cite{Li2001} for compact Riemannian surfaces. The extremal functions for \eqref{eq:MTLiRuf} was consider by Ruf \cite{Ruf2005} and Ishiwata \cite{Ishi} when $N=2$. It was proved that there exists $\ep_0 >0$ such that the supremum in \eqref{eq:MTLiRuf} is attained if $\ep_0 \leq \alpha \leq 4\pi$ while for $\al >0$ small enough, the supremum is not attained. If $N \geq 3$ the supremum in \eqref{eq:MTLiRuf} is attained for any $0 \leq \al \leq \alpha_N$ (see \cite{Ishi} for $0\leq \al < \alpha_N$ and \cite{LiRuf2008} for $\al =\alpha_N$). The extremal functions for \eqref{eq:MTAY} with $\beta \in (0,1)$ was recently proved by Li and Yang \cite{LiYang}. All proofs given in \cite{LiRuf2008,LiYang} for the existence of extremal functions for \eqref{eq:MTLiRuf} and \eqref{eq:MTAY} is based on the method of blow-up analysis. The method of blow-up analysis is now a standard method of dealing with the best Moser--Trudinger type inequalities. We refer the reader to the book \cite{Druet} and the articles \cite{AD2004,doO2014,doO2014*,doO2016,Li2001,Li2005,LiRuf2008,LiYang,LuYang,LuZhu,Nguyen2017,WY2012,Yang2017,Yang06,Yang06*,Yang09,Yang2015} for more details about this method.

In this paper, we give a new proof of the existence of extremal functions for \eqref{eq:MTLiRuf} and \eqref{eq:MTAY}. Our proof avoids the use of the method of blow-up analysis. Furthermore, we will consider a more general variational problem concerning to the Moser--Trudinger type inequalities. More precisely, we consider the following variational problem
\begin{equation}\label{eq:varproblem}
MT(N,\beta,F) = \sup_{u\in W^{1,N}(\R^N),\, \int_{\R^N} (|\na u|^N + |u|^N) dx \leq 1} \int_{\R^N} |x|^{-N\beta} F(u) dx,
\end{equation}
where $F$ is a nonnegative function and $\beta \in [0,1)$. The study of such a problem \eqref{eq:varproblem} is motivated by the work of De Figueiredo, do \'O, and Ruf \cite{deG02} where they studied a more genral variational problem concerning to the Moser--Trudinger type inequalities of the type
\[
\sup_{u\in W^{1,N}_0(\Om), \, \int_{\Om} |\na u|^N dx \leq 1} \int_\Om F(u) dx
\]
where $\Om$ is a smooth bounded domain of $\R^N$ and $F$ is a nonnegative function on $\R$ satisfying some suitable growth conditions. Following \cite{deG02}, we make the following assumptions on the growth condition of $F$ throughout this paper,
\begin{description}
\item (F1) $F \in C^1(\R)$.
\item (F2) $F$ is strictly increasing on $\R_+$ and $F(-t) = F(t)$, $t\in \R$.
\item (F3) $0 \leq F(t) \leq \Phi_N(\al_{\beta,N} |t|^{\frac N{N-1}})$ for all $t\in \R$.
\item (F4) There exist the limits $\lim\limits_{t\to \infty} e^{-\alpha_{\beta,N} t^{\frac N{N-1}}} F(t)$ and $C(F) = \lim\limits_{t\to 0} |t|^{-N} F(t)$.
\end{description}
We say that $F$ has subcritical growth if $\lim\limits_{t\to \infty} e^{-\alpha_{\beta,N} t^{\frac N{N-1}}} F(t) =0.$ Otherwise, we say that $F$ has critical growth, in this case we normalize to $\lim\limits_{t\to \infty} e^{-\alpha_{\beta,N} t^{\frac N{N-1}}} F(t) =1.$

Let $B_N$ denote the sharp constant in the following Gagliardo--Nirenberg--Sobolev inequality
\begin{equation}\label{eq:GNSineq}
B_N = \sup_{u\in W^{1,N}(\R^N),\, u\not\equiv 0} \, \frac{\|u\|_{2N}^{2N}}{\|\na u\|_N^N \|u\|_N^N}.
\end{equation}
A simple variational argument shows that $B_N$ is attained in $W^{1,N}(\R^N)$. Moreover, all maximizers are determined uniquely, up to a translation, dilation and multiple by a non-zero constant, by a spherically symmetric and non-increasing function.

Our first main result of this paper read as follow.
\begin{theorem}\label{Maximizers}
Let $N \geq 2$ and let $F$ be the function on $\R$ satisfying \text{\rm (F1)--(F4)}. Then the following conclusions holds.
\begin{description}
\item (i) If $\beta \in (0,1)$ and $F$ is subcritical then $MT(N,\beta,F)$ is attained. 
\item (ii) If $\beta \in (0,1)$ and $F$ is critical and satisfies the following condition
\begin{equation}\label{eq:lowercond}
F(t) \geq \Phi_N ( \alpha_{\beta,N} |t|^{\frac{N}{N-1}}) -\lambda |t|^N,
\end{equation}
with $\lambda < \alpha_{\beta,N}^{N-1}/(N-1)!$, then $MT(N,\beta,F)$ is attained.
\item (iii) Suppose that $\beta =0$ and $F$ is subcritical and has the following expression
\begin{equation}\label{eq:near0}
F(t) = C(F) |t|^N + \sum_{k=N}^{2(N-1)} C_k |t|^{\frac{Nk}{N-1}} + o(|t|^{2N})
\end{equation}
for $|t|$ small with $C_k \geq 0$ for any $k =N,\ldots,2N-2$. Then $MT(N,0,F)$ is attained if either $C_k >0$ for some $k =N,\ldots,2N-3$ or $C_k =0$ for any $k =N,\ldots,2N-3$ and $C_{2(N-1)} > C(F)/B_N$.
\item (iv) Suppose that $\beta =0$ and $F$ is critical and  satisfies \eqref{eq:lowercond} with $\lambda < \alpha_{N}^{N-1}/(N-1)!$ and \eqref{eq:near0} for $|t|$ small. Then $MT(N,0,F)$ is attained if either $C_k >0$ for some $k =N,\ldots,2N-3$ or $C_k =0$ for any $k =N,\ldots,2N-3$ and $C_{2(N-1)} > C(F)/B_N$.
\end{description}
\end{theorem}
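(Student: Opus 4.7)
The plan is to run a concentration--compactness argument on a symmetrized maximizing sequence. By (F2) and the radial monotonicity of $|x|^{-N\be}$, the Schwarz symmetrization does not decrease $\int|x|^{-N\be}F(u)\,dx$ and preserves the constraint, so I may assume the maximizing sequence $\{u_n\}$ for $MT(N,\be,F)$ consists of non-negative radial non-increasing functions. Passing to a subsequence, $u_n\rightharpoonup u_0$ weakly in $W^{1,N}(\R^N)$ and pointwise a.e. Once $u_0\not\equiv 0$ is known, the uniform exponential integrability from \eqref{eq:MTAY} together with the growth bound (F3) and the compactness of the radial embedding on annuli bounded away from the origin combine via Vitali's theorem to yield $\int|x|^{-N\be}F(u_n)\to\int|x|^{-N\be}F(u_0)$, and a rescaling argument (using $MT(N,\be,F)>F(0)$) then forces $u_0$ to saturate the constraint and attain the supremum.

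Hence the whole problem is to exclude $u_0\equiv 0$. For a radial non-increasing sequence with vanishing weak limit, two degenerate scenarios survive: \emph{concentration} at the origin, or \emph{vanishing} (mass spreading out), the latter only a concern when $\be=0$ since the singular weight $|x|^{-N\be}$ otherwise defeats mass spreading. I would invoke the explicit values of the normalized concentrating limit and normalized vanishing limit of $u\mapsto\int|x|^{-N\be}F(u)\,dx$ computed earlier in the paper, and reduce everything to proving
\[
MT(N,\be,F)\ > \ \text{normalized concentrating limit}
\]
in all four cases, together with
\[
MT(N,0,F)\ > \ C(F)/B_N
\]
in (iii) and (iv), where $C(F)/B_N$ is the normalized vanishing limit coming from \eqref{eq:GNSineq}. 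The subcritical parts (i) and the concentrating side of (iii) are essentially automatic from (F4), since $e^{-\al_{\be,N}t^{N/(N-1)}}F(t)\to 0$ forces $F$ to sit strictly below the exponential that saturates the concentrating limit.

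For the critical cases (ii) and (iv), I would test against a Moser-type concentrating profile $M_n$ normalized so that $\int(|\na M_n|^N+|M_n|^N)\,dx=1$, and feed it into the pointwise lower bound \eqref{eq:lowercond}. Tracking the leading exponential contribution (which saturates the concentrating limit) against the correction from $-\lam|M_n|^N$, the assumption $\lam<\al_{\be,N}^{N-1}/(N-1)!$ is precisely what guarantees a strictly positive surplus. For the vanishing alternative in (iii) and (iv), I would test $MT(N,0,F)$ against a suitable dilation $u_\si(x)=c_\si v^*(\si x)$ of the radial Gagliardo--Nirenberg extremizer $v^*$ from \eqref{eq:GNSineq}, with $\si\to 0$ so that $u_\si$ realizes a vanishing profile. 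Expanding \eqref{eq:near0} term by term: the leading order reproduces $C(F)/B_N$, and each $C_k|t|^{Nk/(N-1)}$ contributes a correction on a different scale in $\si$. A positive $C_k$ with $k\le 2N-3$ decays more slowly than the $L^{2N}$ gain and yields a strict surplus immediately; in the borderline case $k=2(N-1)$ the threshold $C_{2(N-1)}>C(F)/B_N$ is precisely the condition for the surplus to be positive.

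I expect the principal obstacle to be this multi-scale test-function accounting in the vanishing case: one must track several scales in $\si$, choose the optimal one, and verify that the $o(|t|^{2N})$ remainder in \eqref{eq:near0} is strictly dominated by the gain coming from the $C_k$ terms, for which the sharpness of $B_N$ in \eqref{eq:GNSineq} is essential. The rest of the argument is comparatively routine once the two strict inequalities are in hand.
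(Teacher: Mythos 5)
Your overall architecture coincides with the paper's: symmetrize the maximizing sequence, run the Lions-type alternative of Lemma \ref{CCLions}, reduce attainability to the two strict inequalities $MT(N,\beta,F)>d_{ncl}$ and $MT(N,\beta,F)>d_{nvl}$, and manufacture the surpluses with concentrating Moser/Li--Ruf profiles via \eqref{eq:lowercond} and with dilating profiles via \eqref{eq:near0} and the sharpness of $B_N$. Two points need repair.

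First, a slip in the target: the normalized vanishing limit for $\beta=0$ is $d_{nvl}(N,0,F)=C(F)$ (Theorem \ref{NVSlimit}), not $C(F)/B_N$; the quantity $C(F)/B_N$ is only the threshold for the coefficient $C_{2(N-1)}$. In the expansion of $\int_{\R^N}F(w_t)\,dx$ for $w_t(x)=t^{1/N}v(t^{1/N}x)/(1+t\|\na v\|_N^N)^{1/N}$ with $\|v\|_N=1$, the zeroth-order term is $C(F)$; the terms with $N\le k\le 2N-3$ enter at order $t^{k/(N-1)-1}$, which dominates $t$, while at $k=2(N-1)$ the order-$t$ coefficient is $\|\na v\|_N^N\bigl(C_{2(N-1)}\|v\|_{2N}^{2N}/\|\na v\|_N^N-C(F)\bigr)$, which is positive for the GNS maximizer exactly when $C_{2(N-1)}>C(F)/B_N$. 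So your test-function computation lands on the right thresholds, but the inequality you must prove is $MT(N,0,F)>C(F)$.

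Second, and more seriously: for $\beta=0$ the step ``$u_0\not\equiv0$ plus Vitali gives $\int F(u_n)\to\int F(u_0)$'' is false in general. Since $F(t)\sim C(F)|t|^N$ near $0$ and the weight is not singular, $L^N$-mass can escape to infinity along the maximizing sequence, and one only obtains $\lim_n\int F(u_n)=\int F(u_0)+C(F)\,(a-\|u_0\|_N^N)$ with $a=\lim_n\|u_n\|_N^N\ge\|u_0\|_N^N$. A multiplicative rescaling of $u_0$ does not by itself rule out $a>\|u_0\|_N^N$, because the competitor must beat $\int F(u_0)$ \emph{plus} the escaped contribution. The paper closes this by dilating: $v_0(x)=u_0(x/\tau)$ with $\tau^N=a/\|u_0\|_N^N$ keeps $\|\na v_0\|_N=\|\na u_0\|_N$ while raising $\|v_0\|_N^N$ to $a$, so $MT\ge\tau^N\int F(u_0)$; combined with the displayed identity this forces either $\tau=1$ or $MT\le C(F)$, and the latter contradicts $MT>d_{nvl}=C(F)$. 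You need this (or an equivalent) argument; for $\beta\in(0,1)$ the weight $|x|^{-N\beta}$ kills the tail and your Vitali step is fine.
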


The function $F(t) = \Phi_N(\al |t|^{\frac N{N-1}})$ with $\alpha \leq \alpha_{\beta,N}$ satisfies the conditions of Theorem \ref{Maximizers}, hence we recover the results of Ruf \cite{Ruf2005}, Li and Ruf \cite{LiRuf2008}, Ishiwata \cite{Ishi} and Li and Yang \cite{LiYang} for the Moser--Trudinger inequality and singular Moser--Trudinger inequality in $\R^N$. Theorem \ref{Maximizers} generalizes the results in \cite{deG02} for the unit ball to the wholes space $\R^N$. Comparing with the results in \cite{deG02} we see that the behavior of $F$ near zero \eqref{eq:near0} plays an important role in studying the existence of maximizers for $MT(N,0,F)$. Indeed, without this condition, nonexistence results can occur, for examples, when $N=2$ and $F(t)= e^{\alpha t^2} -1$ with $\alpha >0$ very small (see \cite{Ishi}). Another example is $N =3$ and $F(t) = \Phi_3(\al |t|^{3/2}) - \alpha^3|t|^{9/2}/6$ with $\al >0$ very small (the proof is completely similar with the one of Ishiwata \cite{Ishi}). The condition \eqref{eq:lowercond} with $\lam < \alpha_{\beta,N}^{N-1}/(N-1)!$ is used to exclude the concentrating behavior of the maximizer sequence for $MT(N,\beta,F)$. We do not know, at this time, that whether or not maximizer for $MT(N,\beta,F)$ exists without this condition. A similar open problem on $B_1$ was posed in \cite{deG02} (see section $2.5$ in that paper).

Let us explain the method used to prove Theorem \ref{Maximizers}. We take a maximizing sequence $u_n$ for $MT(N,\beta,F)$. Using a rearrangement argument based on P\'olya-Szeg\"o principle \cite{Brothers}, we can assume that $u_n$ is spherically symmetric and non-increasing function. According to a concentration--compactness principle of Lions type \cite{Lions1985} (see Lemma \ref{CCLions} below) we have one of following three possibles. Either $u_n$ is a normalized concentrating sequences or $u_n$ is a normalized vanishing sequences (see the precise definitions below) or $u_n$ converges weakly in $W^{1,N}(\R^N)$ to a non-zero function $u_0$. Computing explicitly the upper bounds of the Moser--Trudinger type functional
\begin{equation}\label{eq:MTfunctional}
J_F^\beta(u) = \int_{\R^N} |x|^{-N \beta}\, F(u) dx
\end{equation}
on the normalized concentrating sequences and the normalized vanishing sequences, we can exclude the concentrating and vanishing behavior of $u_n$. Using again the concentration--compactness principle, we can prove the existence of extremal functions for $MT(N,\beta,F)$ in Theorem \ref{Maximizers}.

We now make precisely the definitions of the normalized concentrating sequences and the normalized vanishing sequences following \cite{Ishi}.
\begin{definition}\label{NCSetNVS}
Let $\{u_n\}_n \subset W^{1,N}(\R^N)$ be a sequence such that $u_n \rightharpoonup u$ weakly in $W^{1,N}(\R^N)$. 
\begin{description}
\item (i) We say that $\{u_n\}_n$ is a normalized concentrating sequence ((NCS) in short) if $\|\na u_n\|_N^N + \|u_n\|_N^N =1$ for all $n$, $u = 0$ and $\lim\limits_{n\to \infty} \int_{B_R^c} \lt(|\na u_n|^N +  |u_n|^N\rt) dx =0,$ for any $R >0$. A (NCS) consisting of radially symmetric functions is called a radially symmetric normalized concentrating sequence ((RNCS) in short).
\item (ii) We say that $\{u_n\}_n$ is a normalized vanishing sequence ((NVS) in short) if $\|\na u_n\|_N^N + \|u_n\|_N^N =1$ for all $n$, $u = 0$ and $\lim\limits_{n\to \infty} \int_{B_R} |x|^{-N\beta} F(u_n) dx = 0,$ for any $R >0$. A (NVS) consisting of radially symmetric functions is called a radially symmetric normalized vanishing sequence ((RNVS) in short).
\end{description}
\end{definition}

Next we introduce obstacle values for the compactness of maximizing sequences
\begin{definition}\label{NCSandNVSlimit}
$\bullet$ The number
\[
d_{ncl}(N,\beta,F) = \sup_{\{u_n\}: {\rm (RNCS)}}\limsup_{n\to \infty} J_F^\beta(u_n),
\]
is called a normalized concentration limit.

$\bullet$ The number 
\[
d_{nvl}(N,\beta,F) = \sup_{\{u_n\}: {\rm (RNVS)}}\limsup_{n\to \infty} J_F^\beta(u_n),
\]
is called a normalized vanishing limit.
\end{definition}

We will compute explicitly two limits in the following two theorems. The first one give us the normalized vanishing limit $d_{nvl}(N,\beta,F)$.
\begin{theorem}\label{NVSlimit}
Let $F$ be a function on $\R$ satisfying the conditions $\text{\rm (F1)--(F4)}$. Then we have
\[
d_{nvl}(N,\beta,F) = 
\begin{cases}
0&\mbox{if $\beta \in (0,1)$,}\\
C(F) &\mbox{if $\beta =0$}.
\end{cases}
\]
\end{theorem}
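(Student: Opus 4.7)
The plan is to prove matching upper and lower bounds on $d_{nvl}(N,\beta,F)$. The main inputs are the near-zero expansion $F(t)=C(F)|t|^N+o(|t|^N)$ provided by (F4), the dominating pointwise bound (F3), and the entire-space Moser--Trudinger estimate \eqref{eq:MTAY}.

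For the upper bound, fix an (RNVS) $\{u_n\}$. Given $\varepsilon>0$, (F4) produces $\delta>0$ with $F(t)\le(C(F)+\varepsilon)|t|^N$ for $|t|\le\delta$. I split
$$
J_F^\beta(u_n)=\int_{B_R}|x|^{-N\beta}F(u_n)\,dx+\int_{B_R^c}|x|^{-N\beta}F(u_n)\,dx,
$$
where the first summand vanishes as $n\to\infty$ by the (RNVS) definition. For $\beta\in(0,1)$ I bound the second by $R^{-N\beta}\int_{\R^N}F(u_n)\,dx$, and (F3) combined with \eqref{eq:MTAY} yields $\int F(u_n)\,dx\le M$ uniformly in $n$; sending $R\to\infty$ forces $d_{nvl}(N,\beta,F)=0$.

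For $\beta=0$ I would reduce to radially non-increasing $u_n$ by Schwarz rearrangement followed by a rescaling $v_n=c_nu_n^*$ with $c_n\ge 1$ to restore unit norm: since $F$ is increasing on $\R_+$ and $\int F$ is rearrangement-invariant (equidistribution), this replacement does not decrease $J_F^0$. For such $v_n$, the classical radial pointwise estimate $|v_n(x)|\le c_N|x|^{-1}$ and the choice $R_\varepsilon\ge c_N/\delta$ yield $F(v_n)\le(C(F)+\varepsilon)|v_n|^N$ on $B_{R_\varepsilon}^c$, so
$$
\int_{B_{R_\varepsilon}^c}F(v_n)\,dx\le(C(F)+\varepsilon)\|v_n\|_N^N\le C(F)+\varepsilon;
$$
combined with $\int_{B_{R_\varepsilon}}F(v_n)\,dx\to 0$ (which itself requires care after rearrangement) and $\varepsilon\to 0$, this gives $d_{nvl}(N,0,F)\le C(F)$. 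The matching lower bound is attained by the spreading family $u_n(x)=a_n\phi(x/r_n)$ with $\phi\in C_c^\infty(\R^N)\setminus\{0\}$ radial non-increasing, $r_n\to\infty$, and $a_n>0$ fixed by $a_n^N(\|\nabla\phi\|_N^N+r_n^N\|\phi\|_N^N)=1$: then $a_n\to 0$, $\|u_n\|_N^N\to 1$, the (RNVS) conditions follow from $u_n\rightharpoonup 0$ and $\int_{B_R}|u_n|^N\,dx=O(r_n^{-N})$, and the near-zero expansion of $F$ gives $\int_{\R^N}F(u_n)\,dx\to C(F)$.

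The main obstacle is the $\beta=0$ upper bound, specifically recovering the vanishing of $\int_{B_{R_\varepsilon}}F(v_n)\,dx$ after Schwarz rearrangement, since symmetric rearrangement can destroy the local vanishing condition defining (RNVS). I would resolve this by combining the persistent weak convergence $v_n\rightharpoonup 0$ (which follows from $u_n\rightharpoonup 0$ since $c_n\to 1$) with Rellich compactness on $B_{R_\varepsilon}$, giving $v_n\to 0$ in $L^p(B_{R_\varepsilon})$ for every finite $p$, and then controlling $F(v_n)$ on the super-level sets $\{|v_n|>\delta\}$ via the Moser--Trudinger inequality and the Chebyshev bound $|\{|v_n|>\delta\}\cap B_{R_\varepsilon}|\le\delta^{-N}\|v_n\|_N^N$.
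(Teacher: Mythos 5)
Your overall strategy coincides with the paper's: split $J_F^\beta(u_n)$ into $B_R$ and $B_R^c$, kill the inner part by the (RNVS) definition, control the outer part using the near-zero behaviour of $F$ from (F4), and realize the value $C(F)$ by a spreading family. Your $\beta\in(0,1)$ upper bound is correct (you use the uniform bound $\int_{\R^N}F(u_n)\,dx\le M$ coming from (F3) and the unweighted inequality \eqref{eq:MTLiRuf} with $\alpha=\alpha_{\beta,N}\le\alpha_N$ — note it is \eqref{eq:MTLiRuf}, not \eqref{eq:MTAY}, that you need here — whereas the paper uses the radial decay; both work). Your lower-bound family $a_n\phi(x/r_n)$ is the same construction as the paper's $\lambda_n\phi(\lambda_n x)/(1+\lambda_n^N)^{1/N}$ and is fine.

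The genuine gap is in your $\beta=0$ upper bound, and it is self-inflicted. The supremum defining $d_{nvl}$ is taken over (RNVS), which by Definition \ref{NCSetNVS} already consist of \emph{radially symmetric} functions, and Lemma \ref{radial} applies to every radial $u\in W^{1,N}(\R^N)$ with no monotonicity hypothesis, giving $|u_n(x)|\le C\|u_n\|_{W^{1,N}}|x|^{-(N-1)/N}$ (not $|x|^{-1}$ as you wrote, though the exponent is immaterial to the argument). So you can apply the decay estimate directly to $u_n$ and conclude $\int_{B_R^c}F(u_n)\,dx\le(C(F)+o_R(1))\|u_n\|_N^N\le C(F)+o_R(1)$ with no rearrangement at all; this is exactly what the paper does. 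Your detour through $v_n=c_nu_n^\sharp$ introduces two unrepaired problems. First, the assertion that $v_n\rightharpoonup0$ ``follows from $u_n\rightharpoonup0$'' is false in general: symmetric decreasing rearrangement does not commute with weak limits (an annular bump escaping to infinity converges weakly to zero while its rearrangement is pinned at the origin), so the Rellich step has no starting point. Second, even granting local strong convergence, your Chebyshev-plus-Moser--Trudinger control of $\int_{\{|v_n|>\delta\}\cap B_{R_\varepsilon}}F(v_n)\,dx$ only shows the super-level sets have small measure; without uniform integrability of $F(v_n)$ (i.e., an $L^r$ bound for some $r>1$, which fails precisely for concentrating sequences) smallness of the domain does not make the integral small. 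Deleting the rearrangement step and invoking the radial lemma directly removes both issues and completes the proof.
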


To state the normalized concentrating limit $d_{ncl}(N,\beta,F)$, let us give some notation. Let $G$ be the distributional solution of the equation
\begin{equation}\label{eq:Green}
-\De_N G + G^{N-1} = \de_0
\end{equation}
on $\R^N$ where $-\De_N$ denotes $N-$Laplacian operator on $\R^N$ and $\de_0$ denotes the Diract measure at the origin. It is well known that $G \in L^N(\R^N) \cap W^{1,N}_{\rm loc}(\R^N\setminus\{0\})$ (e.g., see \cite{LiRuf2008}). Moreover, $G$ is a spherically symmetric and strictly decreasing function and has the following expression
\begin{equation}\label{eq:formG}
G(x) = -\frac N{\al_N} \ln |x| + A_0 + O(|x|^{N} \ln^{N-1}|x|),
\end{equation}
when $x \to 0$ with $A_0$ is constant, and
\begin{equation}\label{eq:formG'}
|\na G(x)| = \frac{N}{\alpha_N} \frac1{|x|} + O(|x|^{N-1} \ln^{N-1} |x|),\qquad x \to 0.
\end{equation}
The normalized concentrating limit $d_{ncl}(N,\beta,F)$ is given in the following theorem.
\begin{theorem}\label{NCSlimit}
Let $F$ be a function on $\R$ satisfying the conditions $\text{\rm (F1)--(F4)}$. Then we have
\[
d_{ncl}(N,\beta,F) =
\begin{cases}
0&\mbox{if $F$ is subcritical,}\\
\frac1{1-\beta}|B_1| e^{(1-\beta)\alpha_N A_0 + 1 + \frac12 +\cdots+ \frac 1{N-1}}&\mbox{if $F$ is critical,}
\end{cases}
\]
where $A_0$ is constant given by \eqref{eq:formG}.
\end{theorem}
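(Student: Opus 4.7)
The plan has two matched parts: an upper bound via radial reduction and a one-dimensional Carleson--Chang type estimate, and a lower bound via an explicit Moser-type construction based on the Green function $G$. The basic reduction is the substitution $t=-N\log|x|$, $v(t)=u(e^{-t/N})$, which for radial non-increasing $u\in W^{1,N}(\R^N)$ yields
\begin{align*}
\int_{\R^N}|\nabla u|^N\,dx &= \alpha_N^{N-1}\int_{\R}|v'(t)|^N\,dt,\\
\int_{\R^N}|u|^N\,dx &= |B_1|\int_{\R}|v(t)|^N e^{-t}\,dt,\\
\int_{\R^N}|x|^{-N\beta}F(u)\,dx &= |B_1|\int_{\R}e^{-(1-\beta)t}F(v(t))\,dt.
\end{align*}
A (RNCS) $u_n$ then becomes a sequence $v_n$ concentrating as $t\to+\infty$, with $v_n(t)\to 0$ at each fixed $t\in\R$.

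For the subcritical case I would split $J_F^\beta(u_n)$ over $B_R^c$ and $B_R$. On $B_R^c$, radial monotonicity together with $\int_{B_R^c}|u_n|^N\,dx\to 0$ forces $u_n\to 0$ uniformly, so Strauss-type pointwise decay supplies an integrable majorant and $\int_{B_R^c}|x|^{-N\beta}F(u_n)\,dx\to 0$. On $B_R$ the subcritical growth $F(t)\leq\epsilon\,\Phi_N(\alpha_{\beta,N}|t|^{N/(N-1)})$ for $|t|\geq M$, combined with \eqref{eq:MTAY}, bounds the $\{|u_n|>M\}$ contribution by $\epsilon C$; the complement gives at most $F(M)\int_{B_R}|x|^{-N\beta}\,dx\to 0$ as $R\to 0$. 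Letting $\epsilon\to 0$ and $R\to 0$ yields $d_{ncl}(N,\beta,F)=0$.

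For the critical upper bound, I would set $w_n=\alpha_N^{(N-1)/N}v_n$; the critical growth then reads $F(\alpha_N^{-(N-1)/N}w)\sim e^{(1-\beta)w^{N/(N-1)}}$ at infinity, and the constraint becomes $\int|w_n'|^N\,dt+|B_1|\alpha_N^{-(N-1)}\int|w_n|^N e^{-t}\,dt\leq 1$, with the $L^N$ piece vanishing along a (RNCS). The core one-dimensional estimate to prove is
\[
\limsup_{n\to\infty}\int_{\R} e^{(1-\beta)(w_n(t)^{N/(N-1)}-t)}\,dt\leq \frac{e^{(1-\beta)\alpha_N A_0+H_{N-1}}}{1-\beta},
\]
where $H_{N-1}=1+\frac{1}{2}+\cdots+\frac{1}{N-1}$. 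The factor $e^{H_{N-1}}$ comes from the classical Carleson--Chang inequality applied after recentering $w_n$ at its zero set; the shift $e^{(1-\beta)\alpha_N A_0}$ is produced by comparing the normalized profile $u_n$ near $0$ with the Green function $G$ of \eqref{eq:Green} via the expansion \eqref{eq:formG}--\eqref{eq:formG'}; and $\frac{1}{1-\beta}=\int_0^\infty e^{-(1-\beta)t}\,dt$ arises from integrating the $\beta$-weight over the bulk region outside the concentration peak.

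For the matching lower bound I would take $r_n\to 0$ and set $u_n=c_n^{-1}\min\{G,G(r_n)\}$, with $c_n$ chosen so that $\|\nabla u_n\|_N^N+\|u_n\|_N^N=1$; the asymptotics \eqref{eq:formG}--\eqref{eq:formG'} give $c_n^N\sim (N/\alpha_N)\log(1/r_n)$ and a direct computation shows that this (RNCS) attains the upper bound in the limit. The main obstacle is the critical upper bound, for two reasons: handling the $L^N$ contribution to the constraint (absent from classical Carleson--Chang), and extracting the precise $A_0$-shift, which requires carefully quantifying the asymptotic resemblance between the normalized profile and the Green function $G$ on shrinking neighborhoods of the concentration point.
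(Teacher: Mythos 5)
Your subcritical argument is correct and somewhat cleaner than the paper's: the paper instead introduces the first crossing time $a_n$ with $w_n(a_n)^{N/(N-1)}=a_n-2\ln a_n$ and splits at $a_n$, whereas your $\epsilon$--$M$--$R$ splitting using \eqref{eq:MTAY} directly is more elementary and works for all $\beta\in[0,1)$. Your one-dimensional reduction also agrees with the paper's up to the harmless substitution $s=(1-\beta)t$ (the paper absorbs the $(1-\beta)$ into the change of variables so that the exponent in Carleson--Chang appears unweighted, which is slightly tidier but not essentially different).

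For the critical upper bound your sketch correctly names the three sources of the answer but does not supply the mechanism that makes the estimate rigorous. The paper's proof hinges on two devices you leave implicit: (a) the \emph{crossing time} $a_n$, the first $t\ge e$ with $w_n(t)^{N/(N-1)}=t-2\ln t$, at which one splits the integral and applies Carleson--Chang only on $[a_n,\infty)$ with $\delta_n$ the tail energy (this handles the objection about the $L^N$ part of the constraint, since Carleson--Chang only needs $\int_{a_n}^\infty|w_n'|^N\le\delta_n$); and (b) the auxiliary one-dimensional variational problem of Section 6 (Lemma~\ref{gamma}), whose extremal is exactly $\gamma(a,b)\,G(e^{-t/(N(1-\beta))})$, and which yields the sharp inequality
\[
w_n(a_n)^{\tfrac{N}{N-1}}\le b_n^{\tfrac{1}{N-1}}a_n+(1-\beta)\alpha_N A_0\,b_n^{\tfrac{1}{N-1}}+O\bigl(e^{-a_n/(N(1-\beta))}a_n^N\bigr),\qquad b_n=1-\delta_n.
\]
It is this lemma, not a soft ``resemblance to $G$'', that forces $\delta_n\to 0$ at rate $O(\ln a_n/a_n)$ and produces the $(1-\beta)\alpha_N A_0$ shift inside Carleson--Chang's exponential. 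Without some quantitative substitute for (b), the upper bound remains heuristic.

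The lower bound construction you propose, $u_n=c_n^{-1}\min\{G,G(r_n)\}$, does \emph{not} attain the claimed constant; this is a genuine error. The constant plateau of $\min\{G,G(r_n)\}$ on $B_{r_n}$ is not the optimal interior profile. Carrying out the computation for $N=2$, $\beta=0$: one finds $c_n^2=G(r_n)+o(1)$ (integrate $-\Delta G+G=0$ by parts on $\{|x|>r_n\}$), hence $4\pi u_n^2=2\ln(1/r_n)+4\pi A_0-4\ln s+o(1)$ at $|x|=sr_n$, and
\[
\int_{\R^2}\bigl(e^{4\pi u_n^2}-1\bigr)dx
=\underbrace{\pi e^{4\pi A_0}}_{|x|<r_n}
+\underbrace{2\pi e^{4\pi A_0}\int_1^\infty s^{-3}\,ds}_{|x|>r_n}+o(1)
=2\pi e^{4\pi A_0},
\]
which is strictly below the target $\pi e^{1+4\pi A_0}$ since $2<e$. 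The Carleson--Chang factor $e^{H_{N-1}}$ is only produced by a refined interior profile; the paper uses the Li--Ruf/Li--Yang test functions \eqref{eq:unsequence}, whose inner part $c_n+c_n^{-1/(N-1)}\bigl(-\tfrac{N-1}{\alpha_{\beta,N}}\ln(1+b_N(|x|/\epsilon_n)^{\frac{N}{N-1}(1-\beta)})+A_n\bigr)$ decreases smoothly from the center, and for which \cite{LiRuf2008,LiYang} show the sharp inequality \eqref{eq:geq3}. You need this (or an equivalent) refinement; a flat truncation loses a factor of $e^{H_{N-1}}/2$.

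Finally, one small point on normalization: your target inequality is stated with a prefactor $1/(1-\beta)$ on the right-hand side of the one-dimensional estimate, but after the substitution $s=(1-\beta)t$ the factor $1/(1-\beta)$ appears in the measure $ds/(1-\beta)$, not as an additional constant to be proven; it is worth checking that your bookkeeping with the exponent $e^{(1-\beta)(w_n^{N/(N-1)}-t)}$ is consistent before attempting the upper bound.
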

 
From Theorem \ref{NVSlimit} and Theorem \ref{NCSlimit}, we see that the normalized vanishing limit and normalized concentrating limit depend on the behavior of the function $F$ near zero and near infinity respectively. In particular, the normalized concentrating sequences have no role in studying $MT(N,\beta,F)$ when $F$ is subcritical. The proof of Theorem \ref{NVSlimit} is elementary by using the assumption (F4). The proof of Theorem \ref{NCSlimit} is more complicate. We follow the argument of Ruf \cite{Ruf2005} when $N=2$ by combining the arguments in \cite{deG91,deG02}. Accidentally, we correct a gap in \cite{Ruf2005}. In that paper, Ruf proved that the normalized concentrating limits $d_{ncl}(2,0,F) =\pi e$ with $F(t) = e^{4\pi t^2} -1$. However, we know that $G(r) = K_0(r)/(2\pi)$ where $K_0$ denotes the modified Bessel function of second kind, hence $A_0 = (\ln 2 -\gamma)/(2\pi)$ with $\gamma$ being the Euler--Mascheroni constant. Hence this limit is $4\pi e^{1-2\gamma}$ which is strict larger than $e\pi$.

The normalized concentrating limit in Theorem \ref{NCSlimit} will be used in \cite{Nguyen17} to study the sharp Moser--Trudinger inequality in $\R^N$ under the equivalent constraints 
\begin{equation}\label{eq;constraint}
S_{a,b} = \{u \in W^{1,N}(\R^N)\, :\, \|\na u\|_N^a + \|u\|_N^b =1\},
\end{equation}
where $N \geq 2$ and $a, b >0$. Let us denote for $a, b$ and $\al >0$
\begin{equation}\label{eq:supremum}
d_{N,\alpha}(a,b) = \sup_{u\in S_{a,b}} \int_{\R^N} \Phi_N(\al |u|^{\frac N{N-1}}) dx.
\end{equation}
It is easy to see that $d_{N,\al}(a,b) < \infty$ if $\al < \al_N$ for any $a,b >0$. It was proved by Lam, Lu and Zhang \cite{LamLuZhang} that $d_{N,\al_N}(a,b) < \infty$ if and only if $b\leq N$. In \cite{doO16}, do \'O, Sani and Tarsi studied the effect of $a$ and $b$ to the attainability of $d_{N,\al_N}(a,b)$ and proved in the case $0< b< N$ and $N \geq 2$ that $d_{N,\al_N}(a,b)$ is attained if $a > N/(N-1)$. The case $b =N$ is excluded in \cite{doO16} to avoid the concentrating behavior of the Moser--Trudinger type functional in the critical case. Indeed, when $0< b< N$ the concentrating behavior does not occur. The case $b =N$ was treated in \cite{Nguyen17} by the author. Exploiting Theorem \ref{NCSlimit} and an appropriate change of functions, the author proves that $d_{N,\al_N}(a,N)$ is attained if 
\[
\frac{N}{N-1} < a < N + \frac{2 N^{N+1}}{(N-1)!}e^{-\al_N A_0 -1 -\frac12 -\cdots -\frac1{N-1}},
\]
where $A_0$ is constant given by \eqref{eq:formG}. In this direction, we refer the interest reader to the paper of Lam, Lu and Zhang \cite{LamLuZhanga} for more general results (still in the subcritical case).


The rest of this paper is organized as follows. In the next section, we recall some fact used in our proofs. The proofs of Theorem \ref{NVSlimit} and Theorem \ref{NCSlimit} are given in section \S3 and \S4 respectively. In section \S5, we prove Theorem \ref{Maximizers}. Finally, we prove an auxiliary variational problem which is used in the proof of Theorem \ref{NCSlimit} in section \S6. 


\section{Preliminaries}
In this section, we introduce some useful results that will be used in our proofs. We first recall the definition of the decreasing symmetric rearrangement and some its properties. Let $\Om \subset \R^N$, $N \geq 2$ be a measurable set. We denote by $\Om^\sharp$ the open ball $B_R$ centered at origin of radius $R >0$ such that $|\Om| =|B_R|$. Let $u :\Om \to \R$ be a measurable function that vanishes at infinity, i.e., for any $t>0$ the set $\{|u| >t\}$ has finite measure. For such a function $u$, its distribution function $\mu_u$ is defined by
\[
\mu_u(t) = |\{x\in \Om\, :\, |u(x)| >t\}|\qquad t >0,
\]
and its decreasing rearrangement function $u^\star$ is defined by
\[
u^*(t) = \inf\{s \geq 0\, :\, \mu_u(s) < t\}.
\]
Note that $u^*$ is a decreasing function on $[0, |\Om|]$. Moreover, the decreasing symmetric rearrangement function $u^\sharp :\Om^\sharp \to \R$ of $u$ is defined by 
\[
u^\sharp(x) = u^*(|B_1| |x|^N)\qquad x \in \Om^\sharp.
\]
Note that $u$ and $u^\sharp$ have the same distribution function, hence $\int_\Om |u|^p dx = \int_{\Om^\sharp} |u^\sharp|^p dx$ for any $p\geq 1$. Moreover, we have the following P\'olya--Szeg\"o principle \cite{Brothers}
\[
\int_\Om |\na u|^p dx \geq \int_{\Om^\sharp} |\na u^\sharp|^p dx,
\]
for any $p \in [1,\infty)$. Consequently, if $u \in W^{1,p}(\R^N)$ then $u^\sharp \in W^{1,p}(\R^N)$. 

We also need the following radial lemma.
\begin{lemma}\label{radial}
If $u\in W^{1,N}(\mathbb R^N)$ be a radial function, then 
\[
|u(x)| \leq \frac{C \|u\|_{W^{1,N}(\R^N)}}{|x|^{\frac{N-1}N}},
\]
with $C$ depends only on $N$.
\end{lemma}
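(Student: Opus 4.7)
The plan is to combine the one-dimensional fundamental theorem of calculus applied to the radial profile of $u$ with Hölder's inequality and Young's inequality. By density of smooth compactly supported radial functions in the radial subspace of $W^{1,N}(\R^N)$, I may assume $u(x)=f(|x|)$ with $f\in C_c^\infty([0,\infty))$; the pointwise estimate then passes to the $W^{1,N}$ limit (up to choosing the good representative, which for radial $W^{1,N}$ functions is automatic away from the origin since $f$ is locally in a $1$D Sobolev space).

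For $r>0$, I write
\[
|f(r)|^N = -\int_r^\infty \frac{d}{ds}|f(s)|^N\,ds = -N\int_r^\infty |f(s)|^{N-2}f(s)\,f'(s)\,ds,
\]
so that $|f(r)|^N \leq N\int_r^\infty |f(s)|^{N-1}|f'(s)|\,ds$. Multiplying both sides by $r^{N-1}$ and bounding $r^{N-1}\leq s^{N-1}$ inside the integral yields
\[
r^{N-1}|f(r)|^N \leq N\int_r^\infty s^{N-1}|f(s)|^{N-1}|f'(s)|\,ds.
\]
Then I apply Hölder's inequality with exponents $N$ and $N/(N-1)$ on the right-hand side, and re-express the two integrals in terms of $\|\na u\|_N$ and $\|u\|_N$ via the polar change of variables $\int_{\R^N}|\cdot|^N dx = \om_{N-1}\int_0^\infty s^{N-1}|\cdot|^N ds$, obtaining
\[
r^{N-1}|f(r)|^N \leq \frac{N}{\om_{N-1}}\,\|\na u\|_N\,\|u\|_N^{N-1}.
\]

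Finally, Young's inequality gives
\[
\|\na u\|_N\,\|u\|_N^{N-1} \leq \tfrac{1}{N}\|\na u\|_N^N + \tfrac{N-1}{N}\|u\|_N^N \leq \|u\|_{W^{1,N}(\R^N)}^N,
\]
which, after taking an $N$-th root, yields the claimed pointwise bound with $C=(N/\om_{N-1})^{1/N}$. There is no genuine obstacle here; the only delicate point is the reduction to a continuous radial representative, which is handled by density plus the elementary observation that for radial $u\in W^{1,N}$ the restriction to any annulus $\{|x|\geq \ep\}$ reduces to a weighted one-dimensional Sobolev function and hence admits a continuous representative.
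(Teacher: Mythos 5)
Your argument is correct, and since the paper states Lemma~\ref{radial} without proof as a classical fact, there is nothing in the text to compare against beyond observing that you have reproduced the standard Strauss-type radial lemma. The chain is sound: the identity $|f(r)|^N=-N\int_r^\infty|f|^{N-2}ff'\,ds$ for $f\in C_c^\infty$, monotone insertion of the Jacobian weight $r^{N-1}\le s^{N-1}$, H\"older with exponents $N$ and $N/(N-1)$ splitting $s^{N-1}$ as $s^{(N-1)/N}\cdot s^{(N-1)^2/N}$ to recover the polar weight on both factors, the conversion $\om_{N-1}\int_0^\infty s^{N-1}|\cdot|^N\,ds=\|\cdot\|_N^N$, and Young's inequality to absorb the product $\|\na u\|_N\|u\|_N^{N-1}$ into $\|u\|_{W^{1,N}}^N$. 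The density of radial $C_c^\infty$ in radial $W^{1,N}(\R^N)$ (radial mollification plus radial cutoff) together with local $1$D Sobolev embedding on annuli correctly disposes of the representative issue, and the explicit constant $C=(N/\om_{N-1})^{1/N}$ depends only on $N$ as required.
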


In our proofs below, we will use frequently the following elementary inequality which follows by the convexity of the function $t \mapsto t^r, r>1$ in $\R_+$.
\begin{lemma}\label{elementary}
Let $r >1$ then for any $a, b\geq 0$ and $\ep >0$, we have
\begin{equation}\label{eq:elementary}
(a+ b)^r \leq (1+ \ep) a^r + (1-(1+\ep)^{\frac 1{1-r}})^{1-r} b^r.
\end{equation}
\end{lemma}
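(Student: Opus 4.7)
The plan is to derive the inequality from Jensen's inequality applied to the convex function $t \mapsto t^r$ on $[0,\infty)$ (convex because $r>1$), with the weight chosen so that the coefficient of $a^r$ lands exactly on the prescribed value $1+\varepsilon$. For any $\lambda \in (0,1)$ one has the trivial identity
\[
a+b = \lambda\cdot\frac{a}{\lambda} + (1-\lambda)\cdot\frac{b}{1-\lambda},
\]
so convexity immediately yields
\[
(a+b)^r \leq \lambda\Bigl(\frac{a}{\lambda}\Bigr)^r + (1-\lambda)\Bigl(\frac{b}{1-\lambda}\Bigr)^r = \lambda^{1-r} a^r + (1-\lambda)^{1-r} b^r.
\]

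Next I would calibrate $\lambda$ by requiring $\lambda^{1-r} = 1+\varepsilon$. Since $r > 1$ makes $1-r$ negative and $1+\varepsilon > 1$, solving this equation gives $\lambda = (1+\varepsilon)^{1/(1-r)}$, which automatically lies in $(0,1)$ (the negative exponent flips $1+\varepsilon>1$ to a value less than $1$, and it is manifestly positive). With this choice, $1-\lambda = 1-(1+\varepsilon)^{1/(1-r)} \in (0,1)$, and the second coefficient becomes $(1-(1+\varepsilon)^{1/(1-r)})^{1-r}$, reproducing exactly the constant claimed in the lemma.

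There is no real obstacle; the only thing worth double-checking is the degenerate cases. If $b=0$ the bound reduces to $a^r \leq (1+\varepsilon) a^r$, trivially true. If $a=0$ it reads $b^r \leq (1-\lambda)^{1-r} b^r$, and since $1-\lambda \in (0,1)$ and $1-r<0$, the factor $(1-\lambda)^{1-r}$ exceeds $1$, so the bound still holds. Hence the lemma reduces to a single application of convexity with the precisely tuned weight $\lambda = (1+\varepsilon)^{1/(1-r)}$.
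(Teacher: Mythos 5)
Your argument is correct and is exactly the one the paper has in mind — the lemma is stated as a consequence of the convexity of $t\mapsto t^r$ on $\R_+$, and your weighted Jensen decomposition with $\lambda = (1+\varepsilon)^{1/(1-r)}$ is the standard way to realize that, including the check that $\lambda\in(0,1)$. Nothing to add.
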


We next collect a concentration--compactness priciple of Lions type \cite{Lions1985} for the sequence of the spherically symmetric and non-increasing functions in $W^{1,N}(\R^N)$.
\begin{lemma}\label{CCLions}
Let $\{u_n\}$ be a sequence of spherically symmetric and non-increasing functions in $W^{1,N}(\R^N)$ with $\|\na u_n\|_N^N + \|u_n\|_N^N =1$ and $u_n \rightharpoonup u_0$ weakly in $W^{1,N}(\R^N)$. Then one of the following conclusions holds.
\begin{description}
\item (i) $u_n$ is (RNCS).
\item (ii) $u_n$ is (RNVS).
\item (iii) $u_0\not=0$ and 
\[
\limsup_{n\to \infty} \int_{\R^N} |x|^{-N\beta} \Phi_N(\alpha_{\beta,N} p u_n^{\frac N{N-1}}) dx < \infty
\]
for any $p < (1 -\|u_0\|_N^N -\|\na u_0\|_N^N)^{-1/(N-1)}$.
\end{description}
\end{lemma}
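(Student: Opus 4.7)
The plan is to follow the concentration--compactness strategy of Lions adapted to the radial non-increasing setting, where by symmetry and monotonicity any concentration must occur at the origin. Extracting a subsequence and using Rellich--Kondrachov, I may assume $u_n\to u_0$ pointwise a.e.\ and in $L^p_{\rm loc}(\R^N)$ for every finite $p$. The argument splits on whether $u_0\equiv 0$ or $u_0\not\equiv 0$: the former case yields a dichotomy between (i) and (ii), the latter yields (iii).

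Suppose first $u_0\equiv 0$ and $\{u_n\}$ is not (RNCS). Negating the definition and passing to a subsequence yields $R_0,\delta>0$ with $\int_{B_{R_0}^c}(|\nabla u_n|^N+|u_n|^N)\,dx\geq \delta$, so $\|u_n\|_{W^{1,N}(B_{R_0})}^N\leq 1-\delta<1$. Rescaling $u_n|_{B_{R_0}}$ and applying \eqref{eq:MTAY} gives the supercritical bound
\[
\int_{B_{R_0}}|x|^{-N\beta}\Phi_N\bigl(q\,\alpha_{\beta,N}u_n^{N/(N-1)}\bigr)\,dx\leq C(q,\delta)\quad\text{for any } q<(1-\delta)^{-1/(N-1)}.
\]
For fixed $R>0$ I split $B_R=B_r\cup(B_R\setminus B_r)$. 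Radial monotonicity and $u_n\to 0$ in $L^N(B_r)$ (Rellich) combined with $u_n(r)\leq|B_r|^{-1/N}\|u_n\|_{L^N(B_r)}$ force $u_n\to 0$ uniformly on $B_R\setminus B_r$, so that part vanishes as $n\to\infty$. On $B_r$, H\"older with conjugates $(q,q')$, the bound $F(u_n)^q\leq C\Phi_N(q\alpha_{\beta,N}u_n^{N/(N-1)})$ from (F3), and the higher integrability above give
\[
\int_{B_r}|x|^{-N\beta}F(u_n)\,dx\leq C\Bigl(\int_{B_r}|x|^{-N\beta}dx\Bigr)^{1/q'}\leq C\,r^{N(1-\beta)/q'},
\]
which tends to $0$ as $r\to 0$ uniformly in $n$. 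Sending $r\to 0$ first and $n\to\infty$ next establishes (ii).

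For $u_0\not\equiv 0$, set $v_n=u_n-u_0$. Brezis--Lieb (for $|u_n|^N$ via a.e.\ convergence, and in its $W^{1,N}$-form for the gradient) yields $\tau:=\lim_n(\|\nabla v_n\|_N^N+\|v_n\|_N^N)=1-(\|\nabla u_0\|_N^N+\|u_0\|_N^N)<1$. Given $p<\tau^{-1/(N-1)}$, choose $\epsilon>0$ and $q>1$ close to $1$ with $q(1+\epsilon)p\tau^{1/(N-1)}<1$. The triangle inequality and Lemma \ref{elementary} with $r=N/(N-1)$ give
\[
|u_n|^{\frac{N}{N-1}}\leq(1+\epsilon)|v_n|^{\frac{N}{N-1}}+C_\epsilon|u_0|^{\frac{N}{N-1}}.
\]
Writing $p_N(t):=\sum_{k=0}^{N-2}t^k/k!$ so $\Phi_N=e^{(\cdot)}-p_N$, the identity $e^{a+b}=e^ae^b$ delivers
\[
\Phi_N(a+b)=\Phi_N(a)\Phi_N(b)+\Phi_N(a)p_N(b)+p_N(a)\Phi_N(b)+\bigl[p_N(a)p_N(b)-p_N(a+b)\bigr].
\]
Combining this with H\"older of exponents $(q,q')$ and the split weight $|x|^{-N\beta}=|x|^{-N\beta/q}\cdot|x|^{-N\beta/q'}$ reduces (iii) to uniformly controlling: (a) $\int|x|^{-N\beta}\Phi_N(q(1+\epsilon)p\alpha_{\beta,N}|v_n|^{N/(N-1)})\,dx$, via \eqref{eq:MTAY} applied to $v_n/\|v_n\|_{W^{1,N}}$ since the effective coefficient stays strictly below $\alpha_{\beta,N}$; (b) finitely many fixed integrals in $u_0$, each finite by \eqref{eq:MTAY} applied to $u_0$; and (c) polynomial cross-terms $|v_n|^j|u_0|^l$ with $j+l\geq N-1$, controlled by H\"older and the Sobolev embedding $W^{1,N}\hookrightarrow L^s$ for $s\in[N,\infty)$.

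The main obstacle is the decomposition of $\Phi_N(a+b)$ into pieces separately integrable against the singular weight $|x|^{-N\beta}\,dx$ on all of $\R^N$, a domain on which pure exponentials are not integrable. The polynomial remainder $p_N(a)p_N(b)-p_N(a+b)$ has total degree between $N-1$ and $2(N-2)$ in $(a,b)$, and its treatment via the pointwise bound $\Phi_N(s)^q\leq C_{N,q}\Phi_N(qs)$ for $q\geq 1$ together with tight control on H\"older exponents must keep the effective Moser--Trudinger coefficient below the critical threshold $\alpha_{\beta,N}$ to permit application of \eqref{eq:MTAY}.
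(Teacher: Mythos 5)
Your strategy for the case $u_0\equiv 0$ matches the paper's, but the pivotal step is not justified as written. The supercritical local bound $\int_{B_{R_0}}|x|^{-N\beta}\Phi_N\bigl(q\,\alpha_{\beta,N}u_n^{N/(N-1)}\bigr)dx\le C(q,\delta)$ cannot be obtained by ``rescaling $u_n|_{B_{R_0}}$ and applying \eqref{eq:MTAY}'': the restriction of $u_n$ to $B_{R_0}$ is not a function on $\R^N$ with constrained full norm, so \eqref{eq:MTAY} does not apply to it, and it does not lie in $W^{1,N}_0(B_{R_0})$, so the bounded-domain inequality \eqref{eq:MTsingular} does not apply either; any extension or rescaling that restores admissibility destroys the gain coming from $\int_{B_{R_0}}(|\na u_n|^N+|u_n|^N)dx\le 1-\delta$. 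The paper's device, which is exactly the missing ingredient, is to set $v_n=u_n-u_n(R_0)\in W^{1,N}_0(B_{R_0})$, note $\int_{B_{R_0}}|\na v_n|^Ndx\le a<1$ for large $n$, absorb the constant $u_n(R_0)$ (bounded by Lemma \ref{radial}, and in fact tending to $0$) through Lemma \ref{elementary}, and only then apply \eqref{eq:MTsingular}. Once that bound is in hand, your reorganization of the conclusion --- treating the annulus $B_R\setminus B_r$ by uniform convergence from radial monotonicity and the small ball $B_r$ by H\"older against the weight --- is a legitimate substitute for the paper's two-case analysis (which distinguishes $\int_{B_{R_0}^c}|u_n|^Ndx\to 0$ from $\to\sigma>0$ precisely because its gradient bound is tied to the fixed radius $R_0$).

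For $u_0\not\equiv 0$ the paper gives no argument: it quotes Theorem 1.1 of \cite{doO2014} for $\beta=0$ and of \cite{doO2015*} for $\beta\in(0,1)$. Your self-contained sketch is the standard Lions-type improvement, but it hinges on the identity $\lim_n\bigl(\|\na v_n\|_N^N+\|v_n\|_N^N\bigr)=1-\|\na u_0\|_N^N-\|u_0\|_N^N$ for $v_n=u_n-u_0$, and the gradient part of this is not a consequence of weak convergence alone: the Brezis--Lieb lemma for $\|\na(u_n-u_0)\|_N^N$ requires a.e.\ convergence of the gradients, which you have not established and which is the genuinely delicate point for $N\ge 3$ (for $N=2$ the Hilbert structure of $L^2$ gives $\|\na v_n\|_2^2=\|\na u_n\|_2^2-\|\na u_0\|_2^2+o(1)$ directly). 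This is precisely what the cited references are invoked to circumvent. The rest of your scheme for (iii) --- the elementary inequality from Lemma \ref{elementary}, the factorization of $\Phi_N(a+b)$, the bound $\Phi_N(s)^q\le C_{N,q}\Phi_N(qs)$, and the H\"older splitting of the singular weight --- is sound.
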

\begin{proof}
Suppose that $u_0 \equiv 0$. If $u_n$ is not (RNCS), then there exist $R_0 > 0$ and $\delta \in (0,1]$ such that 
\[
\lim_{n\to \infty} \int_{B_{R_0}^c} (|\na u_n|^N + |u_n|^N) dx \geq \de.
\]
From Lemma \ref{radial}, we have $u_n(r) \leq C R_0^{-\frac{N-1}N}$ for any $r \geq R_0$ with $C$ depending only on $N$ (here $u_n(r)$ means the value of $u_n$ at $x$ with $|x| =r$). Hence $F(u_n(r)) \leq C u_n(r)^N$ for any $r \geq R_0$ with $C$ depending only on $N$, $R_0$ and $F$. We have following two cases: 

$\bullet$ \emph{Case 1:} $\lim\limits_{n\to\infty} \int_{B_{R_0}^c} |u_n|^N dx =0$. Since 
\[
\int_{B_R\setminus B_{R_0}} |x|^{-N\beta} F(u_n) dx \leq C R_0^{-N\beta}\int_{B_{R_0}^c} |u_n|^N dx \to 0
\]
as $n \to \infty$, hence it is enough to prove
\begin{equation}\label{eq:R0}
\lim_{n\to\infty} \int_{B_{R_0}} |x|^{-N\beta} F(u_n) dx =0.
\end{equation}
Define $v_n(r) = u_n(r) -u_n(R_0)$, then $v_n \in W^{1,N}_0(B_{R_0})$ and
\[
\lim_{n\to \infty} \int_{B_{R_0}} |\na v_n|^N dx =\lim_{n \to \infty} \int_{B_{R_0}} |\na u_n|^N dx \leq 1 -\de < 1.
\]
Hence, there exist $a \in (1-\de,1)$ and $n_0$ such that $\int_{B_{R_0}} |\na v_n|^N dx \leq a < 1$ for any $n \geq n_0$. By Lemma \ref{elementary}, we have
\[
u_n^{\frac N{N-1}} =\lt(v_n + u_n(R_0)\rt)^{\frac N{N-1}} \leq (1+ \ep) v_n^{\frac{N}{N-1}} + \lt(1-(1+ \ep)^{1-N}\rt)^{\frac1{1-N}} u_n(R_0)^{\frac N{N-1}}.
\]
Therefore,
\begin{align*}
\int_{B_{R_0}} |x|^{-N \beta} F(u_n) dx &\leq \int_{B_{R_0}} e^{\al_{\beta,N} |u_n|^{\frac N{N-1}}} |x|^{-N\beta} dx\\
&\leq C(N,R_0,\ep) \int_{B_{R_0}} e^{\al_{\beta,N}(1+ \ep) |v_n|^{\frac N{N-1}}} |x|^{-N\beta} dx,
\end{align*}
with $C(N,R_0,\ep)$ depending only on $N,R_0$ and $\ep$. Choosing $\ep >0$ small enough such that $(1+ \ep) a^{1/(N-1)} < 1$, we obtain from the singular Moser--Trudinger inequality that $F(u_n)$ is bounded in $L^r(B_{R_0}, |x|^{-N\beta} dx)$ for some $r > 1$. This together with $u_n\to u_0 = 0$ a.e in $\R^N$ implies \eqref{eq:R0}.

$\bullet$ \emph{Case 2:} $\lim\limits_{n\to\infty} \int_{B_{R_0}^c} |u_n|^N dx =\sigma >0.$ In this case, we have
\[
\liminf_{n\to \infty} \int_{\R^N} |u_n|^N dx \geq \sigma >0,
\]
and then
\[
\limsup_{n\to \infty} \int_{B_R} |\na u_n|^N dx \leq 1 -\sigma < 1,
\]
for any $R >0$. Repeating the proof of \eqref{eq:R0} with $R_0$ replaced by $R$, we get
\[
\lim_{n\to\infty} \int_{B_{R}} |x|^{-N\beta} F(u_n) dx =0.
\]
Hence $u_n$ is (RNVS).

If $u_0\not\equiv 0$, then the conclusion follows from \cite[Theorem $1.1$]{doO2014} for $\beta =0$ and from \cite[Theorem $1.1$]{doO2015*} for $\beta\in (0,1)$.
\end{proof}

\section{Proof of Theorem \ref{NVSlimit}}
In this section, we compute the normalized vanishing limit $d_{nvl}$ in Theorem \ref{NVSlimit}. We divide our proof into two cases following $\beta =0$ or $\beta \in (0,1)$.

$\bullet$ \emph{Case 1:} $\beta \in (0,1)$. Let $u_n$ be an arbitrary (RNVS). By Lemma \ref{radial}, we have $u_n(R) \leq C R^{-(N-1)/N}$ for any $R >0$ with $C$ depending only on $N$. The assumtion $\text{\rm (F4)}$ on $F$ implies that $F(t) = (C(F)+o_t(1)) |t|^N$ as $t\to 0$. Thus, for any $R >0$ we get
\[
\int_{B_R^c}|x|^{-\beta N} F(u_n) dx \leq R^{-\beta N} \int_{B_R^c} F(u_n) dx =R^{-\beta N} (C(F) + o_R(1)) \int_{B_R^c} |u_n|^N dx,
\]
as $R \to \infty$. Hence
\begin{equation}\label{eq:outsubvanishing}
\lim_{R\to\infty} \limsup_{n\to\infty}\int_{B_R^c}|x|^{-\beta N} F(u_n) dx =0.
\end{equation}
Since $u_n$ is a (RNVS) then
\[
\lim_{n\to \infty} \int_{B_R} |x|^{-\beta N} F(u_n) dx =0,
\]
for any $R >0$. This together with \eqref{eq:outsubvanishing} implies
\[
\lim_{n\to \infty} \int_{\R^N} |x|^{-N\beta} F(u_n) dx =0.
\]
Since $u_n$ is an arbitrary (RNVS), hence $d_{nvl}(N,\beta,F) =0$.

$\bullet$ \emph{Case 2:} $\beta =0$. Let $u_n$ be an arbitrary (RNVS). We know that $u_n(R) \leq C R^{-(N-1)/N}$ for any $R >0$ with $C$ depending only on $N$. Hence 
\[
F(u_n(r)) = (C(F) + o_R(1)) |u_n(r)|^N
\]
for any $r \geq R$ when $R\to\infty$ by assumption (F4) on $F$. Hence
\[
\int_{B_R^c} F(u_n) dx =(C(F) + o_R(1)) \int_{B_R^c} |u_n|^N dx \leq C(F) + o_R(1),
\]
which implies
\begin{equation}\label{eq:outsubvanish}
\lim_{R\to\infty} \limsup_{n\to \infty} \int_{B_R^c} F(u_n) dx \leq C(F).
\end{equation}
Since $u_n$ is a (RNVS) then
\[
\lim_{n\to \infty} \int_{B_R} F(u_n) dx =0,
\]
for any $R >0$. This together with \eqref{eq:outsubvanishing} implies
\[
\lim_{n\to \infty} \int_{\R^N} F(u_n) dx \leq C(F).
\]
Since $u_n$ is an arbitrary (RNVS), hence $d_{nvl}(N,0,F) \leq C(F)$.

It remains to show that $d_{nvl}(N,0,F) \geq C(F)$. Let $\phi$ be a smooth, compactly supported, spherically symmetric and non-increasing function in $\R^N$ with $\|\na \phi\|_N = \|\phi\|_N =1$, and let $\lam_n$ be a sequence of nonnegative numbers such that $\lam_n \to 0$ as $n \to \infty$. Define $\psi_n(x) = \lam_n \phi (\lam_n x)$, then $\|\na \psi_n\|_N^N =\lambda_n^N$ and $\|\psi_n\|_N^N =1$. Define $u_n = \psi_n /(1+ \lam_n^N)^{1/N}$, then $u_n$ is a (RNVS) as $n\to \infty$ (see Example $2.1$ in \cite{Ishi}). Note that $u_n \to 0$ uniformly in $\R^N$ as $n\to \infty$. By assumption (F4) on $F$, we have $F(u_n) = (C(F) + o_n(1)) |u_n|^N$ as $n\to \infty$, and hence
\[
\int_{\R^N} F(u_n) dx = (C(F) + o_n(1))\int_{\R^N} |u_n|^N dx = (C(F) + o_n(1)) \frac{1}{1+ \lam_n^N}.
\]
Letting $n\to\infty$, we get
\[
d_{nvl}(N,0,F) \geq C(F).
\]
This finishes our proof.

\section{Proof of Theorem \ref{NCSlimit}}
We follow the argument in \cite{deG02,Ruf2005}. Let $u_n$ be an arbitrary (RNCS). We define the function $w_n$ on $\R$ by
\begin{equation}\label{eq:changefunct}
w_n(t) = [(1-\beta)N \omega_{N-1}^{1/(N-1)}]^{(N-1)/N}\, u_n(e^{-t/((1-\beta)N)}).
\end{equation}
Then $w_n$ is a non-decreasing function with $\lim\limits_{t\to -\infty} w_n(t) = 0$, and
\begin{equation}\label{eq:relationnabla}
\int_{\R} |w_n'(t)|^N dt = \int_{\R^N} |\na u_n|^N dx,
\end{equation}
\begin{equation}\label{eq:relationNnorm}
\int_{\R} |w_n(t)|^N e^{-t/(1-\beta)} dt =(1-\beta)^N N^N \int_{\R^N} |u_n|^N dx,
\end{equation}
and
\begin{equation}\label{eq:relationF}
\int_{\R^N}|x|^{-N\beta} F(u_n) dx = \frac{|B_1|}{1-\beta} \int_{\R} F\lt(\alpha_{\beta,N}^{\frac{1-N}N} w_n(t)\rt) e^{-t} dt.
\end{equation}

Since $u_n$ is (RNCS) then we have
\begin{equation}\label{eq:concentrationcond}
\lim_{n\to\infty} \int_{-\infty}^A \lt(|w_n'(t)|^N + \frac1{(1-\beta)^NN^N} |w_n(t)|^N e^{-t/(1-\beta)}\rt) dt =0,
\end{equation}
for any $A\in \R$. Also, by $u_n \rightharpoonup 0$ weakly in $W^{1,N}(\R^N)$ and $u_n$ is spherically symmetric and non-increasing, then $u_n \to 0$ a.e. in $\R^N$. Consequently, we get $w_n \to 0$ a.e. in $\R$. We first claim that
\begin{equation}\label{eq:claimsub}
\lim_{n\to\infty} \int_{-\infty}^A F\lt(\alpha_{\beta,N}^{\frac{1-N}N} w_n(t)\rt) e^{-t} dt =0
\end{equation}
for any $A \in \R$. Indeed, for a fixed $A$, we have $0\leq w_n(t) \leq w_n(A) \to 0$ as $n\to \infty$ for any $t\leq A$, hence
\[
F\lt(\alpha_{\beta,N}^{\frac{1-N}N} w_n(t)\rt) =(C(F) + o_n(1)) \al_{\beta,N}^{1-N} w_n(t)^{N}
\]
for any $t\leq A$ as $n\to \infty$. By this, we get
\begin{align*}
\int_{-\infty}^A F\lt(\alpha_{\beta,N}^{\frac{1-N}N} w_n(t)\rt) e^{-t} dt &= (C(F)+o_n(1)) \al_{\beta,N}^{1-N} \int_{-\infty}^A w_n(t)^N e^{-t} dt\\
&\leq (C(F)+o_n(1)) \al_{\beta,N}^{1-N} e^{\frac{\beta A}{1-\beta}} \int_{-\infty}^A w_n(t)^N e^{-\frac{t}{1-\beta}} dt.
\end{align*}
This together with \eqref{eq:concentrationcond} proves our claim \eqref{eq:claimsub}. We divide our proof into two cases following the subcriticality and criticality of $F$.


\subsection{$F$ is subcritical}
We first consider the case when $F$ is subcritical. In this case, we need to prove that
\begin{equation}\label{eq:suff}
\lim_{n\to \infty} \int_{\R} F\lt(\alpha_{\beta,N}^{\frac{1-N}N} w_n(t)\rt) e^{-t} dt = 0.
\end{equation}
Since $w_n(e)^{\frac N{N-1}} \to 0$ as $n\to \infty$, then there exists $n_0$ such that 
\[
w_n(e)^{\frac N{N-1}} < e -2 \ln e = e-2,
\]
for any $n\geq n_0$. We have two following cases:

$\bullet$ \emph{Case $1$}: $w_n(t)^{\frac N{N-1}} < t- 2\ln t$ for any $n\geq n_0$ and $t\geq e$. For any $A \geq e$, we have
\[
\int_A^\infty F\lt(\alpha_{\beta,N}^{\frac{1-N}N} w_n(t)\rt) e^{-t} dt \leq \int_A^\infty t^{-2} dt = \frac1A,
\]
for any $n\geq n_0$. Thus
\begin{align*}
\int_{\R} F\lt(\alpha_{\beta,N}^{\frac{1-N}N} w_n(t)\rt) e^{-t} dt &= \int_{-\infty}^A F\lt(\alpha_{\beta,N}^{\frac{1-N}N} w_n(t)\rt) e^{-t} dt + \int_A^\infty F\lt(\alpha_{\beta,N}^{\frac{1-N}N} w_n(t)\rt) e^{-t} dt\\
&\leq \int_{-\infty}^A F\lt(\alpha_{\beta,N}^{\frac{1-N}N} w_n(t)\rt) e^{-t} dt + \frac1A.
\end{align*}
Letting $n\to \infty$, $A \to \infty$ and using the claim \eqref{eq:claimsub}, we get \eqref{eq:suff}.

$\bullet$ \emph{Case $2$}: There exists $t \geq e$ such that $w_n(t)^{\frac N{N-1}} = t -2 \ln t$. Let $a_n$ be the first $t\geq e$ such that this equality holds. Using H\"older inequality, we have
\[
w_n(a_n) -w_n(0) = \int_0^{a_n} w_n'(t) dt \leq a_n^{\frac{N-1}N} \lt(\int_0^{a_n} |w_n'(t)|^N dt\rt)^{\frac 1N}.
\]
Dividing both sides by $a_n^{\frac{N-1}N}$, we get
\[
\lt(1 -\frac{2\ln a_n}{a_n}\rt)^{\frac{N-1}N} -\frac{w_n(0)}{a_n^{\frac{N-1}N}} \leq \lt(\int_0^{a_n} |w_n'(t)|^N dt\rt)^{\frac 1N}.
\]
This implies that $a_n\to \infty$ since $w_n(0) \to 0$ as $n\to \infty$ and \eqref{eq:concentrationcond}. 

For a fixed $A \geq e$, we have $a_n \geq A$ for $n$ large enough. The definition of $a_n$ implies that $w_n(t)^{\frac N{N-1}} \leq t -2 \ln t$ for any $t \in (A,a_n)$. Hence
\[
\int_{A}^{a_n}F\lt(\alpha_{\beta,N}^{\frac{1-N}N} w_n(t)\rt) e^{-t} dt \leq \int_A^{a_n} t^{-2}dt = \frac 1A -\frac1{a_n}.
\]
On $[a_n, \infty)$ we have $w_n(t)^{\frac N{N-1}} \geq a_n - 2\ln a_n \to \infty$. Hence the subcriticality of $F$ implies that
\[
F\lt(\alpha_{\beta,N}^{\frac{1-N}N} w_n(t)\rt) = o_n(1) e^{w_n(t)^{\frac N{N-1}}} =o_n(1) \Phi_N(w_n(t)^{\frac N{N-1}}),
\]
for $t \geq a_n$ as $n\to \infty$. Integrating on $[a_n,\infty)$ and using the singular Moser--Trudinger inequality \eqref{eq:MTAY} and \eqref{eq:relationF} for $F(t) = \Phi_N(\al_{\beta,N} |t|^{\frac N{N-1}})$, we get
\begin{align*}
\int_{a_n}^\infty F\lt(\alpha_{\beta,N}^{\frac{1-N}N} w_n(t)\rt) e^{-t} dt& = o_n(1)\int_{a_n}^\infty \Phi_N(w_n(t)^{\frac N{N-1}}) e^{-t} dt\\
&\leq o_n(1) \int_0^\infty \Phi_N(w_n(t)^{\frac N{N-1}}) e^{-t} dt\\
&=o_n(1) \frac{1-\beta}{|B_1|} \int_{\R^N} |x|^{-N\beta} \Phi_N(\alpha_{\beta,N} |u_n|^{\frac N{N-1}}) dx\\
&= o_n(1).
\end{align*}
Thus
\begin{align*}
\int_{\R} F\lt(\alpha_{\beta,N}^{\frac{1-N}N} w_n(t)\rt) e^{-t} dt &= \int_{-\infty}^A F\lt(\alpha_{\beta,N}^{\frac{1-N}N} w_n(t)\rt) e^{-t} dt + \int_{A}^{a_n} F\lt(\alpha_{\beta,N}^{\frac{1-N}N} w_n(t)\rt) e^{-t} dt\\
&\qquad + \int_{a_n}^\infty F\lt(\alpha_{\beta,N}^{\frac{1-N}N} w_n(t)\rt) e^{-t} dt\\
&\leq \int_{-\infty}^A F\lt(\alpha_{\beta,N}^{\frac{1-N}N} w_n(t)\rt) e^{-t} dt + \frac1A -\frac1{a_n} + o_n(1). 
\end{align*}
Letting $n\to \infty$ and using \eqref{eq:claimsub}, and then letting $A \to \infty$ we obtain \eqref{eq:suff}.

Consequently, \eqref{eq:suff} holds. Using \eqref{eq:relationF}, we get
\[
\lim_{n\to\infty} \int_{\R^N} |x|^{-\beta N} F(u_n) dx =0.
\]
Since $u_n$ is an arbitrary (RNCS), then $d_{ncl}(N,\beta, F) =0$.


\subsection{$F$ is critical}
We first show that
\begin{equation}\label{eq:lower}
\lim_{n\to \infty} \int_{\R} F\lt(\alpha_{\beta,N}^{\frac{1-N}N} w_n(t)\rt) e^{-t} dt \leq e^{(1-\beta)\alpha_N A_0 + 1 + \frac12 + \cdots + \frac1{N-1}}.
\end{equation}
We can assume, in addition, that
\begin{equation}\label{eq:extracond}
\lim_{n\to \infty} \int_{\R} F\lt(\alpha_{\beta,N}^{\frac{1-N}N} w_n(t)\rt) e^{-t} dt > 0.
\end{equation}
Indeed, if otherwise there is nothing to prove. Since $w_n(e) \to 0$ as $n\to \infty$, we then have $w_n(e)^{\frac N{N-1}} < e -2 \ln e$ for any $n\geq n_0$ for some $n_0$. Let $a_n$ be the first $t\geq e$ such that $w_n(t)^{\frac N{N-1}} = t- 2\ln t$. Such an $a_n$ exists since otherwise we have
\[
w_n(t)^{\frac N{N-1}} < t -2 \ln t,\qquad\forall\, t \geq e.
\]
Thus for $A \geq e$ we have
\[
\int_A^\infty F\lt(\alpha_{\beta,N}^{\frac{1-N}N} w_n(t)\rt) e^{-t} dt \leq \int_A^\infty t^{-2} dt = \frac 1A,
\]
and hence
\[
\int_{\R} F\lt(\alpha_{\beta,N}^{\frac{1-N}N} w_n(t)\rt) e^{-t} dt \leq \int_{-\infty}^A F\lt(\alpha_{\beta,N}^{\frac{1-N}N} w_n(t)\rt) e^{-t} dt + \frac1A.
\]
Letting $n\to \infty$, using \eqref{eq:claimsub} and then letting $A\to \infty$, we get
\[
\lim_{n\to \infty} \int_{\R} F\lt(\alpha_{\beta,N}^{\frac{1-N}N} w_n(t)\rt) e^{-t} dt = 0,
\]
which contradicts with \eqref{eq:extracond}. Thus $a_n$ exists.

Repeating the argument in Case $2$ in the previous subcritical case, we get $a_n \to \infty$. We next claim that

\begin{equation}\label{eq:claim0}
\lim_{n\to \infty} \int_{-\infty}^{a_n} F\lt(\alpha_{\beta,N}^{\frac{1-N}N} w_n(t)\rt) e^{-t} dt = 0.
\end{equation}
Indeed, for any $A \geq e$, we have $a_n > A$ for $n$ large enough, and $w_n(t)^{\frac{N}{N-1}} < t -2 \ln t$ for any $t \in [A,a_n)$, hence
\begin{align*}
\int_{-\infty}^{a_n} F\lt(\alpha_{\beta,N}^{\frac{1-N}N} w_n(t)\rt) e^{-t} dt &= \int_{-\infty}^{A} F\lt(\alpha_{\beta,N}^{\frac{1-N}N} w_n(t)\rt) e^{-t} dt + \int_{A}^{a_n} F\lt(\alpha_{\beta,N}^{\frac{1-N}N} w_n(t)\rt) e^{-t} dt\\
&\leq \int_{-\infty}^{A} F\lt(\alpha_{\beta,N}^{\frac{1-N}N} w_n(t)\rt) e^{-t} dt + \frac1A -\frac1{a_n}.
\end{align*}
Letting $n\to \infty$, using \eqref{eq:claimsub} and then letting $A\to \infty$, we obtain \eqref{eq:claim0}.

By \eqref{eq:claim0}, it is enough to prove
\begin{equation}\label{eq:enough}
\lim_{n\to \infty} \int_{a_n}^\infty F\lt(\alpha_{\beta,N}^{\frac{1-N}N} w_n(t)\rt) e^{-t} dt \leq e^{(1-\beta)\alpha_N A_0 + 1 + \frac12 + \cdots + \frac1{N-1}}
\end{equation}
in order to verify \eqref{eq:lower}. On $[a_n,\infty)$, we have 
\[
w_n(t)^{\frac N{N-1}} \geq w_n(a_n)^{\frac{N}{N-1}} = a_n -2\ln a_n\to \infty.
\]
The criticality of $F$ implies that
\[
F\lt(\alpha_{\beta,N}^{\frac{1-N}N} w_n(t)\rt) = (1+ o_n(1)) e^{w_n(t)^{\frac N{N-1}}}
\]
for $t\geq a_n$ as $n\to \infty$. Hence
\begin{equation}\label{eq:A0}
\int_{a_n}^\infty F\lt(\alpha_{\beta,N}^{\frac{1-N}N} w_n(t)\rt) e^{-t} dt = (1+o_n(1) \int_{a_n}^\infty  e^{w_n(t)^{\frac N{N-1}}-t} dt,
\end{equation}
as $n\to \infty$. To proceed, we need the following result of Carleson and Chang \cite{CC1986}
\begin{lemma}\label{CC}
Let $a > 0$ and $\de >0$ be given numbers. Suppose that $\int_a^\infty |w'(t)|^N dt \leq \de$, then
\begin{align*}
\int_a^\infty &e^{w(t)^{\frac N{N-1}} -t} dt\\
& \leq \frac1{1 -\delta^{\frac1{N-1}}} \exp\lt(w(a)^{\frac N{N-1}} \lt[1 + \frac 1{N-1} \frac{\de}{(1 -\de^{\frac1{N-1}})^{N-1}}\rt]-a\rt) e^{1+\frac12 +\cdots+ \frac1{N-1}}.
\end{align*}
\end{lemma}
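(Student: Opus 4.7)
}

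My plan is to turn the hypothesis $\int_a^\infty |w'|^N\,dt\le \delta$ into a pointwise bound on $w(t)^{N/(N-1)}$ on $[a,\infty)$ and then reduce the integral to an elementary exponential one. The first step is the one-variable H\"older inequality with exponents $N$ and $N/(N-1)$: for every $t\ge a$,
\[
w(t)-w(a)=\int_a^t w'(s)\,ds \;\le\; (t-a)^{(N-1)/N}\lt(\int_a^t |w'|^N ds\rt)^{1/N}\;\le\; \delta^{1/N}(t-a)^{(N-1)/N}.
\]
Raising to the power $N/(N-1)$ and invoking Lemma \ref{elementary} with $r=N/(N-1)$ and a parameter $\vep>0$ to be chosen, I would obtain
\[
w(t)^{\frac N{N-1}} \le (1+\vep)\,w(a)^{\frac N{N-1}} + \frac{\delta^{1/(N-1)}}{(1-(1+\vep)^{-(N-1)})^{1/(N-1)}}\,(t-a).
\]
Substituting this into the integrand then gives
\[
\int_a^\infty e^{w(t)^{N/(N-1)}-t}\,dt \le e^{(1+\vep)w(a)^{N/(N-1)}-a}\int_0^\infty e^{(K(\vep,\delta)-1)s}\,ds,
\]
with $K(\vep,\delta)=(1-(1+\vep)^{-(N-1)})^{-1/(N-1)}\delta^{1/(N-1)}$. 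Convergence of the last integral holds whenever $(1+\vep)^{N-1}>1/(1-\delta)$, which is possible because $\delta<1$; this yields the prefactor $(1-K(\vep,\delta))^{-1}$.

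The next step is an optimal calibration of $\vep$. To match the statement of the lemma, I would choose $\vep$ so that $1+\vep = 1 + \tfrac{1}{N-1}\,\delta/(1-\delta^{1/(N-1)})^{N-1}$, which is the minimal value compatible with absorbing the cross term $2 w(a)\cdot \delta^{1/N}(t-a)^{(N-1)/N}$ (and its higher-order analogues) coming from the binomial expansion of $(w(a)+\delta^{1/N}(t-a)^{(N-1)/N})^{N/(N-1)}$. Tracking this calibration carefully shows that $K(\vep,\delta)$ collapses to $\delta^{1/(N-1)}$, producing the factor $(1-\delta^{1/(N-1)})^{-1}$ in the target bound.

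The truly delicate part, which I expect to be the main obstacle, is the appearance of the factor $e^{1+\frac12+\cdots+\frac1{N-1}}$. The natural way to produce it is to split $e^{w^{N/(N-1)}} = \Phi_N(w^{N/(N-1)})+\sum_{k=0}^{N-2}\frac{w^{Nk/(N-1)}}{k!}$ and to treat each polynomial remainder term individually: after using the elementary bound $w(t)\le w(a)+\delta^{1/N}(t-a)^{(N-1)/N}$ in these polynomial terms and performing repeated integration by parts with respect to $e^{-t}dt$, the harmonic partial sums $\sum_{j=1}^{N-1}\frac1j$ arise from the telescoping of boundary contributions at $t=a$; exponentiating these sums combines with the tail bound for $\Phi_N$-part (estimated as above) to yield the claimed factor. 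The bookkeeping for this last step mirrors what Carleson and Chang \cite{CC1986} did in dimension $N=2$ (where the factor is simply $e^1$) and reflects the same harmonic structure visible in the expansion \eqref{eq:formG} of the Green function $G$ of $-\De_N+\mathrm{Id}$.
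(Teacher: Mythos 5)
The paper does not actually prove Lemma \ref{CC}: it is quoted as a known result of Carleson and Chang \cite{CC1986}, so the only question is whether your self-contained argument establishes the stated bound. It does not --- the calibration step fails. After H\"older and Lemma \ref{elementary} you arrive at
\[
\int_a^\infty e^{w(t)^{N/(N-1)}-t}\,dt \;\le\; \frac{e^{(1+\varepsilon)w(a)^{N/(N-1)}-a}}{1-C(\varepsilon)\,\delta^{1/(N-1)}},
\qquad C(\varepsilon)=\bigl(1-(1+\varepsilon)^{-(N-1)}\bigr)^{-1/(N-1)},
\]
valid only when $C(\varepsilon)\delta^{1/(N-1)}<1$, i.e.\ $(1+\varepsilon)^{N-1}>1/(1-\delta)$. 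Since $C(\varepsilon)>1$ for every finite $\varepsilon$, the prefactor is always strictly worse than $(1-\delta^{1/(N-1)})^{-1}$; and if you insist on the lemma's exponent coefficient, i.e.\ $\varepsilon=\tfrac{1}{N-1}\delta(1-\delta^{1/(N-1)})^{-(N-1)}\sim\delta/(N-1)$, then $(1+\varepsilon)^{N-1}\approx 1+\delta<1/(1-\delta)$, so the convergence condition fails and $C(\varepsilon)\delta^{1/(N-1)}\ge 1$. Your assertion that $K(\varepsilon,\delta)$ ``collapses to $\delta^{1/(N-1)}$'' would require $C(\varepsilon)=1$, i.e.\ $\varepsilon=\infty$. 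There is no admissible choice of $\varepsilon$; the pointwise bound $w(t)\le w(a)+\delta^{1/N}(t-a)^{(N-1)/N}$ is intrinsically too lossy for these sharp constants. And the sharpness matters: in the proof of Theorem \ref{NCSlimit} one has $\delta_n\sim 2(N-1)\ln a_n/a_n$ and $w_n(a_n)^{N/(N-1)}=a_n-2\ln a_n$, so any degradation of the coefficient of $\delta$ in the exponent (or of the prefactor) destroys the cancellation that makes $K_n$ converge.

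The second half of your sketch is not an argument. The factor $e^{1+\frac12+\cdots+\frac1{N-1}}$ does not come from peeling off the polynomial part of $\Phi_N$ and integrating by parts; in Carleson--Chang it emerges from a genuinely finer analysis (a change of variables reducing to a one-dimensional comparison with the extremal Moser-type profile, keeping track of how much Dirichlet energy is spent beyond the level $t=a$ as a function of $t$, not just its total $\delta$). The honest course here is to cite \cite{CC1986} for the lemma, as the paper does, rather than attempt to rederive it from the pointwise H\"older estimate.
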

\noindent Applying Lemma \ref{CC} to $w_n$ with $a = a_n$ and 
\[
\de =\de_n: = \int_{a_n}^\infty \lt(|w_n'(t)|^N + \frac1{(1-\beta)^NN^N} |w_n(t)|^N e^{-\frac{t}{1-\beta}}\rt) dt,
\]
we get
\begin{equation}\label{eq:A1}
\int_{a_n}^\infty e^{w_n(t)^{\frac N{N-1}} -t} dt \leq \frac1{1 -\delta_n^{\frac1{N-1}}} e^{K_n} e^{1 + \frac12 + \cdots+ \frac1{N-1}},
\end{equation}
with 
\[
K_n := w_n(a_n)^{\frac N{N-1}} \lt[1 + \frac 1{N-1} \frac{\de_n}{(1 -\de_n^{\frac1{N-1}})^{N-1}}\rt]-a_n.
\]

We next use the following result proved in Appendix (see Lemma \ref{gamma} below): for $a >0$ and $b >0$, let $S_{N,\beta,a,b}$ be the set of functions $u \in W^{1,N}(-\infty,a)$ such that $\lim\limits_{t\to -\infty} u(t) =0$ and 
\[
\int_{-\infty}^a \lt(|u'(t)|^N + \frac1{(1-\beta)^NN^N} |u(t)|^N e^{-\frac{t}{1-\beta}}\rt) dt = b,
\]
then the supremum
\[
\sup\{ \|u\|_\infty^{\frac N{N-1}} : u \in S_{N,\beta,a,b}\}
\]
is attained by a function $w$ such that 
\[
\|w\|_\infty^{\frac N{N-1}} = w(a)^{\frac{N}{N-1}} = b^{\frac1{N-1}}a + (1-\beta)\alpha_N A_0 b^{\frac1{N-1}} + O(e^{-\frac a{N(1-\beta)}} a^N).
\]
Applying this result for $a_n$ and $b_n = 1-\de_n$, we get
\[
a_n - 2\ln a_n = w_n(a_n)^{\frac N{N-1}} \leq a_n (1-\de_n)^{\frac1{N-1}} +(1-\beta)\alpha_N A_0 (1-\de_n)^{\frac1{N-1}} + O(e^{-\frac {a_n}{N(1-\beta)}} a_n^N),
\]
or equivalently
\[
1 - (1-\de_n)^{\frac1{N-1}} = \frac{2 \ln a_n}{a_n} +(1-\beta)\alpha_N A_0 (1-\de_n)^{\frac1{N-1}} \frac1{a_n} + O(e^{-\frac {a_n}{N(1-\beta)}} a_n^{N-1}).
\]
It is easy to check that $1 -(1-\de_n)^{\frac1{N-1}} \geq \de_n /(N-1)$, hence
\[
\frac{\delta_n}{N-1}\lt(1+ \frac{(1-\beta)\alpha_N A_0}{a_n}\rt) \leq \frac{2 \ln a_n}{a_n} +\frac{(1-\beta)\alpha_N A_0}{a_n} + O(e^{-\frac {a_n}{N(1-\beta)}} a_n^{N-1}).
\]
We then get
\[
\de_n \leq \frac{2(N-1) \ln a_n}{a_n} + \frac{(N-1)(1-\beta)\alpha_N A_0}{a_n} + O\lt(\frac{\ln a_n}{a_n^2}\rt).
\]
This shows that $\de_n \to 0$ as $n\to \infty$. Plugging the estimate for $\de_n$ into $K_n$ we get
\begin{align*}
K_n&= a_n\lt[\lt(1-\frac{2\ln a_n}{a_n}\rt) \lt(1 + \frac{\de_n}{N-1} + O(\de_n^{\frac N{N-1}}) \rt) -1\rt]\\
&\leq a_n\lt[\lt(1-\frac{2\ln a_n}{a_n}\rt) \lt(1 + \frac{2\ln a_n}{a_n} +\frac{(1-\beta)\alpha_N A_0}{a_n} + O\lt(\lt(\frac{\ln a_n}{a_n}\rt)^{\frac N{N-1}}\rt) \rt) -1 \rt]\\
&= (1-\beta)\alpha_N A_0 + O\lt(\lt(\frac{\ln a_n}{a_n^{\frac1N}}\rt)^{\frac N{N-1}}\rt).
\end{align*}
Hence $\lim\limits_{n\to \infty} K_n \leq (1-\beta)\alpha_N A_0$. Letting $n\to \infty$ in \eqref{eq:A1}, we get
\[
\lim_{n\to \infty} \int_{a_n}^\infty e^{w_n(t)^{\frac N{N-1}} -t} dt \leq  e^{(1-\beta)\alpha_N A_0 + 1 + \frac12 + \cdots+ \frac1{N-1}}.
\]
This together with \eqref{eq:A0} proves \eqref{eq:enough}. Thus we have shown that
\[
\lim_{n\to \infty} \int_{\R^N} |x|^{-\beta N} F(u_n) dx \leq \frac1{1-\beta} |B_1| e^{(1-\beta)\alpha_N A_0 + 1 + \frac12 + \cdots + \frac1{N-1}},
\]
for any (RNCS) sequence $u_n$. Hence 
\begin{equation}\label{eq:A3}
d_{ncl}(N,\beta,F) \leq \frac1{1-\beta} |B_1| e^{(1-\beta)\alpha_N A_0 + 1 + \frac12 + \cdots + \frac1{N-1}}.
\end{equation}


We next construct a (RNCS) sequence $u_n$ such that
\begin{equation}\label{eq:***}
\lim_{n\to \infty} \int_{\R^N} |x|^{-N\beta} F\lt(u_n\rt) dx \geq \frac1{1-\beta} |B_1| e^{(1-\beta)\alpha_N A_0 + 1 + \frac12 + \cdots + \frac1{N-1}}.
\end{equation}
This will gives 
\begin{equation}\label{eq:A4}
d_{ncl}(N,\beta,F) \geq \frac1{1-\beta} |B_1| e^{(1-\beta)\alpha_N A_0 + 1 + \frac12 + \cdots + \frac1{N-1}}.
\end{equation}
Combining \eqref{eq:A3} and \eqref{eq:A4}, we obtain 
\[
d_{ncl}(N,\beta,F) = \frac1{1-\beta} |B_1| e^{(1-\beta)\alpha_N A_0 + 1 + \frac12 + \cdots + \frac1{N-1}}
\]
as wanted.

Let $\{\ep_n\}$ be a sequence of positive number such that $\ep_n \to 0$ as $n \to \infty$. Denote $R_n = (-\ln \ep_n)^{\frac1{1-\beta}}$ and define the function $u_n$ by
\begin{equation}\label{eq:unsequence}
u_n =
\begin{cases}
c_n + \frac{1}{c_n^{\frac1{N-1}}} \lt(-\frac{N-1}{\al_{\be,N}} \ln \lt(1+ b_N\lt(\frac{|x|}{\ep_n}\rt)^{\frac{N}{N-1}(1-\beta)}\rt)  + A_n\rt) &\mbox{if $|x|\leq R_n\ep_n$,}\\
\frac1{c_n^{\frac1{N-1}}} G(x) &\mbox{if $|x|> R_n\ep_n $}
\end{cases}
\end{equation}
where $b_N = \al_N/(N^{\frac N{N-1}}(1-\beta)^{\frac1{N-1}})$, and $A_n, c_n$ are constant depending on $n$ and $\beta$ to be determined such that $u_n \in W^{1,N}(\mathbb R^N)$ and $\|\na u_n\|_N^N + \|u\|_N^N =1$. These functions was introduced by Li and Ruf \cite{LiRuf2008} when $\beta =0$ and by Li and Yang \cite{LiYang} for $\beta \in (0,1)$.

It was computed in \cite{LiRuf2008,LiYang} that
\[
c_n^{\frac{N}{N-1}} = -\frac N{\al_N} \ln \ep_n + A_0 -\frac{N-1}{\al_{\be,N}} \sum_{k=1}^{N-1} \frac1k + \frac1{\al_{\be,N}} \ln \frac{\om_{N-1}}{N(1-\beta)} + O((-\ln \ep_n)^{-\frac N{N-1}}),
\]
and 
\[
A_n = -\frac{N-1}{\al_{\be,N}} \sum_{k=1}^{N-1} \frac1k + O((-\ln \ep_n)^{-\frac N{N-1}}).
\]
Hence $c_n \to \infty$ as $n\to 0$. Note that $R_n\ep_n \to 0$ as $n\to \infty$, hence $u_n$ is (RNCS). Moreover, again by \cite{LiRuf2008,LiYang}, we have
\begin{align}\label{eq:geq3}
\int_{\R^N}& |x|^{-N\beta} \Phi_N(\al_{\beta,N} u_n^{\frac N{N-1}}) dx \notag\\
&\qquad \geq \frac{|B_1|}{1-\beta}e^{\al_{\beta,N}A_0 + 1 + \frac12 +\cdots+\frac1{N-1}} + \frac{\al_{\beta,N}^{N-1}}{(N-1)! c_n^{\frac N{N-1}}} \lt( \int_{\R^N} \frac{|G|^N}{|x|^{\beta N}} dx + o_n(1) \rt),
\end{align}
where $o_n(1) \to 0$ as $n\to \infty$.

By the assumption (F3) on $F$ and the criticality of $F$, we can write 
\[
F(t) = \Phi_N(\al_{\beta,N} |t|^{\frac N{N-1}}) -G(t)
\]
with $G\geq 0$ satisfying the conditions (F1)-F(4) and being subcritical. Hence, we get
\[
\lim_{n\to \infty} \int_{\R^N} |x|^{-\beta N} G(u_n) dx =0.
\]
This together with \eqref{eq:geq3} implies \eqref{eq:***}.

\section{Proof of Theorem \ref{Maximizers}}
This section is devoted to prove Theorem \ref{Maximizers}. Let $u_n$ be a maximizer sequence for $MT(N,\beta,F)$. By the rearrangement argument, we can assume that $u_n$ is decreasing symmetric, radial function. We can assume further that $u_n \rightharpoonup u_0$ weakly in $W^{1,N}(\mathbb R^N)$, $u_n \to u_0$ in $L^p_{\rm loc}(\R^N)$ for any $p < \infty$ and $u_n \to u_0$ a.e. in $\R^N$. We first exclude the concentrating or vanishing behavior of the sequence $u_n$. We will need the following lemma which gives a lower bound for $MT(N,\beta,F)$.
\begin{lemma}\label{lowerboundMT}
Let $F$ be function satisfying the condition (F1)-(F4) above. Then
\[
MT(N,\beta,F) > \max\{d_{nvl}(N,\beta,F), d_{ncl}(N,\beta,F)\}
\]
under the conditions on $F$ in Theorem \ref{Maximizers}.
\end{lemma}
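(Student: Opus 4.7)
The plan is to split into the four sub-cases of Theorem \ref{Maximizers} and, in each, exhibit an explicit admissible test function whose $J_F^\beta$-value strictly exceeds the obstacle $\max\{d_{nvl},d_{ncl}\}$ computed in Theorems \ref{NVSlimit}--\ref{NCSlimit}. Case (i) is immediate: both obstacle values vanish, whereas any compactly supported nonzero admissible $u$ has $J_F^\beta(u)>0$ because $F(0)=0$ and $F(t)>0$ for $t\neq 0$ by (F2)--(F3).

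For cases (ii) and (iv) I would beat $d_{ncl}$ using the concentrating ansatz $u_n$ of \eqref{eq:unsequence}. The sharp lower bound \eqref{eq:geq3} already supplies $d_{ncl}(N,\beta,F)$ plus a surplus of order $c_n^{-N/(N-1)}$ with coefficient $\tfrac{\alpha_{\beta,N}^{N-1}}{(N-1)!}\int_{\R^N}|G|^N|x|^{-N\beta}\,dx$, which is finite by \eqref{eq:formG} and $G\in L^N$. Combining this with the minoration \eqref{eq:lowercond} one subtracts $\lambda\int|x|^{-N\beta}|u_n|^N\,dx$; a direct scaling using $u_n=G/c_n^{1/(N-1)}$ on $\{|x|>R_n\epsilon_n\}$ together with the trivial bound $u_n\leq c_n$ on the inner region yields
\[
\int_{\R^N}\frac{|u_n|^N}{|x|^{N\beta}}\,dx=\frac{1}{c_n^{N/(N-1)}}\Bigl(\int_{\R^N}\frac{|G|^N}{|x|^{N\beta}}\,dx+o(1)\Bigr),
\]
since the inner contribution is $O\bigl(c_n^N(R_n\epsilon_n)^{N(1-\beta)}\bigr)=o(c_n^{-N/(N-1)})$ by the relations $R_n=(-\ln\epsilon_n)^{1/(1-\beta)}$ and $c_n^{N/(N-1)}\sim -\tfrac{N}{\alpha_N}\ln\epsilon_n$. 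Assembling the pieces,
\[
J_F^\beta(u_n)\geq d_{ncl}(N,\beta,F)+\frac{1}{c_n^{N/(N-1)}}\Bigl(\frac{\alpha_{\beta,N}^{N-1}}{(N-1)!}-\lambda\Bigr)\int_{\R^N}\frac{|G|^N}{|x|^{N\beta}}\,dx+o\bigl(c_n^{-N/(N-1)}\bigr),
\]
which is strictly above $d_{ncl}$ for large $n$ whenever $\lambda<\alpha_{\beta,N}^{N-1}/(N-1)!$.

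For cases (iii) and (iv) I would beat $d_{nvl}=C(F)$ with a scaled GNS maximizer. Let $\phi$ be a radial non-increasing extremizer of \eqref{eq:GNSineq} normalized by $\|\na\phi\|_N=\|\phi\|_N=1$, so $\|\phi\|_{2N}^{2N}=B_N$, and set $u_n=\lambda_n\phi(\lambda_n\cdot)/(1+\lambda_n^N)^{1/N}$ with $\lambda_n\to 0^+$; then $\|\na u_n\|_N^N+\|u_n\|_N^N=1$ and $\|u_n\|_\infty\to 0$. The uniform expansion \eqref{eq:near0}, combined with the scaling identity $\int_{\R^N}|u_n|^p\,dx=(1+\lambda_n^N)^{-p/N}\lambda_n^{p-N}\|\phi\|_p^p$, produces
\[
J_F^0(u_n)-C(F)=-C(F)\lambda_n^N+\sum_{k=N}^{2(N-1)}C_k\,\lambda_n^{N(k-N+1)/(N-1)}\|\phi\|_{Nk/(N-1)}^{Nk/(N-1)}+o(\lambda_n^N).
\]
If some $C_k>0$ for $k\in\{N,\dots,2N-3\}$, the smallest such $k$ delivers a positive dominant term at order $\lambda_n^{N(k-N+1)/(N-1)}$ with exponent strictly less than $N$. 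Otherwise only $k=2(N-1)$ survives in the sum, and its $\lambda_n^N$ coefficient is $C_{2(N-1)}B_N-C(F)>0$ by hypothesis. In both alternatives $J_F^0(u_n)>C(F)$ for $n$ large. Case (iv) is then a superposition: the concentrating family beats $d_{ncl}$ and the vanishing family beats $d_{nvl}$, and each strict inequality passes to the common supremum $MT(N,0,F)$.

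The main technical hurdle is the scale-matching in cases (ii) and (iv): one must confirm that the mass of $|u_n|^N$ stored inside the shrinking ball $B_{R_n\epsilon_n}$ is genuinely of lower order than $c_n^{-N/(N-1)}$, because the whole argument relies on the gain $\tfrac{\alpha_{\beta,N}^{N-1}}{(N-1)!}-\lambda$ prevailing at precisely that scale. The expansions of $c_n$ and $A_n$ recalled from \cite{LiRuf2008,LiYang} make this bookkeeping routine, and the vanishing analysis in case (iii) is then an elementary comparison of exponents.
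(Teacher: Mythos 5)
Your proposal is correct and takes essentially the same route as the paper: the critical cases are handled by the same Li--Ruf/Li--Yang concentrating family via \eqref{eq:geq3}, \eqref{eq:lowercond} and the expansion of $\int_{\R^N}|u_n|^N|x|^{-N\beta}\,dx$, and the $\beta=0$ vanishing obstacle $C(F)$ is beaten by the same scaled (GNS-extremal) test functions using \eqref{eq:near0}. The exponent bookkeeping you carry out matches \eqref{eq:asymptotict}, so nothing further is needed.
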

\begin{proof}
If $\beta \in (0,1)$ and $F$ is subcritical then $d_{nvl}(N,\beta,F) = d_{ncl}(N,\beta,F) =0$. Hence our conclusion is trivial.

If $\beta \in (0,1)$ and $F$ is critical. By Theorem \ref{NVSlimit} and Theorem \ref{NCSlimit} we have 
\[
d_{nvl}(N,\beta,F) =0\qquad\text{\rm and }\qquad d_{ncl}(N,\beta,F) = \frac{|B_1|}{1-\beta} e^{(1-\beta)\al_N A_0 + 1+\frac12 + \cdots+ \frac1{N-1}}.
\]
Let $u_n$ be the sequence defined by \eqref{eq:unsequence}, we have
\begin{equation}\label{eq:unintegration}
\int_{\R^N} |u_n|^N dx =\frac{1}{c_n^{\frac N{N-1}}} \lt(\int_{\R^N} \frac{|G|^N}{|x|^{N\beta}} dx + o_n(1)\rt).
\end{equation}
Combining \eqref{eq:unintegration}, \eqref{eq:geq3} and \eqref{eq:lowercond}, we have
\begin{align}\label{eq:geq4}
\int_{\R^N} &|x|^{-N \beta} F(u_n) dx \notag\\
&\geq \frac{|B_1|}{1-\beta}e^{\al_{\beta,N}A_0 + 1 + \frac12 +\cdots+\frac1{N-1}} + \frac{1}{ c_n^{\frac N{N-1}}} \lt[\lt(\frac{\al_{\beta,N}^{N-1}}{(N-1)!}-\lambda\rt) \int_{\R^N} \frac{|G|^N}{|x|^{\beta N}} dx + o_n(1) \rt].
\end{align}
This implies our conclusion for $n$ large enough since $\lam < \alpha_{\beta,N}^{N-1}/(N-1)!$.

If $\beta =0$, and $F$ is subcritical. We know that $d_{ncl}(N,0,F) =0$ and $d_{nvl}(N,0,F) =C(F)$. Hence, it is enough to prove $MT(N,0,F) > d_{nvl}(N,0,F)$. Fix a function $v \in W^{1,N}(\R^N)$ which is bounded, spherically symmetric and non-increasing such that $\|v\|_N =1$. For $t > 0$, define the new function $w_t$ by
\[
w_t(x) = \frac{t^{1/N} v(t^{1/N}x)}{(1+ t \|\na v\|_N^N)^{\frac1N}},\qquad x\in \R^N.
\]
We then have $\|w_t\|_N^N + \|\na w_t\|_N^N =1$ and $w_t \to 0$ uniformly on $\R^N$. By \eqref{eq:near0}, we get
\[
F(w_t) = C(F) |w_t|^{N} + \sum_{k=N}^{2(N-1)}C_k |w_t|^{\frac{Nk}{N-1}} + o(|w_t|^{2N}).
\]
Hence
\begin{align}\label{eq:asymptotict}
\int_{\R^N} & F(w_t) dx\notag\\
&= \frac{C(F)}{1+ t\|\na v\|_N^N} + \sum_{k=1}^{2(N-1)}C_k \frac{t^{\frac k{N-1} -1} \|v\|_{\frac{Nk}{N-1}}^{\frac{Nk}{N-1}}}{(1+ t\|\na v\|_N^N)^{\frac k{N-1}}} + o(t)\notag\\
&=C(F) + tC(F) \|\na v\|_N^N\lt(\frac{C_{2(N-1)}}{C(F)} \frac{\|v\|_{2N}^{2N}}{\|\na v\|_N^N} -1\rt) + \sum_{k=N}^{2N-3} t^{\frac k{N-1}-1}C_k \|v\|_{\frac{Nk}{N-1}}^{\frac{Nk}{N-1}} + o(t)
\end{align}
Consequently, if $C_k >0$ for some $k =N,\ldots,2N-3$ we then have $MT(N,0,F) > C(F) = d_{nvl}(N,0,F)$ for $t >0$ small enough. If $C_k=0$ for any $k =N,\ldots,2N-3$, by taking $v$ to be the maximizer of the Gagliardo--Nirenberg--Sobolev inequality \eqref{eq:GNSineq}, we then have
\[
\int_{\R^N} F(w_t) dx = C(F) + tC(F) \|\na v\|_N^N\lt(\frac{C_{2(N-1)}}{C(F)} B_N -1\rt) + o(t).
\]
Hence, by choosing $t >0$ small enough, we get $MT(N,0,F) > C(F) = d_{nvl}(N,0,F)$ if $C_{2(N-1)}> C(F)/B_N$.

If $\beta =0$ and $F$ is critical. Note that under the condition \eqref{eq:lowercond}, the estimate \eqref{eq:geq4} still holds for $\beta =0$. This implies $MT(N,0,F) > d_{ncl}(N,0,F)$. Also, under the condition of $F$ near zero the estimate \eqref{eq:asymptotict} still holds. Repeating the argument in the subcritical case, we obtain $MT(N,0,\beta) > d_{nvl}(N,0,\beta)$ as wanted.
\end{proof}

Since $u_n$ is a maximizer sequence, we have
\[
\lim_{n\to \infty} \int_{\R^N} |x|^{-N\beta} F(u_n) dx =MT(N,\beta,F).
\]
This together Lemma \ref{CCLions}, Theorem \ref{NVSlimit}, Theorem \ref{NCSlimit} and Lemma \ref{lowerboundMT} implies that $u_0\not\equiv 0$. Again by Lemma \ref{CCLions}, we get
\begin{equation}\label{eq:CC*}
\limsup_{n\to\infty} \int_{\R^N} |x|^{-N\beta} \Phi_N(\alpha_{\beta,N} p u_n^{\frac N{N-1}}) dx < \infty,
\end{equation}
for any $1 < p < (1 -\|u_0\|_N^N -\|\na u_0\|_N^N)^{-\frac1{N-1}}$. We have following two cases:

$\bullet$ \emph{Case 1:} $\beta \in (0,1)$. For any $R >0$, we have $u_n(r) \leq C R^{-\frac{N-1}N}$ for any $r \geq R$ with $C$ depending only on $N$ by Lemma \ref{radial}. Thus $u_n(r) = O(R^{-\frac{N-1}N})$ uniformly in $n$ for any $r \geq R$. Hence
\[
F(u_n(r)) = (C(F) + o_R(1)) u_n(r)^N
\]
uniformly on $n$ for any $r \geq R$, here $o_R(1) \to 0$ as $R\to \infty$. Consequently, we get
\[
\int_{B_R^c} |x|^{-N \beta} F(u_n) dx = (C(F) + o_R(1))\int_{B_R^c} \frac{u_n^N}{|x|^{N \beta}} dx =o_R(1),
\]
uniformly in $n$, since $\beta \in (0,1)$. On $B_R$ we have
\[
F(u_n)^p \leq \Phi_N(\al_{\beta,N} u_n^{\frac N{N-1}})^p \leq e^{\alpha_{\beta,N} p u_n^{\frac N{N-1}}},
\]
for any $p \geq 1$. Repeating the argument in proof of \eqref{eq:R0} with the help of \eqref{eq:CC*}, we obtain that $F(u_n)$ is bounded in $L^r(B_R, |x|^{-N\beta} dx)$ for some $r >1$, hence
\[
\lim_{n\to \infty} \int_{B_R} |x|^{-N\beta} F(u_n) dx = \int_{B_R} |x|^{-N\beta} F(u_0) dx,
\]
for any $R >0$. Consequently, 
\[
MT(N,\beta,F) = \lim_{n\to\infty} \int_{\R^N} |x|^{-N\beta} F(u_n) dx = \int_{\R^N} |x|^{-N\beta} F(u_0) dx.
\]
This equality together with the strictly increasing monotonicity of the function $F$ implies that $\|u_0\|_N^N + \|\na u_0\|_N^N =1$, hence $u_0$ is a maximizers for $MT(N,\beta,F)$.

$\bullet$ \emph{Case 2:} $\beta =0$. Let $a =\lim\limits_{n\to \infty} \|u_n\|_N^N$ then $\|u_0\|_N^N \leq a \leq 1$. As in the case $\beta \in (0,1)$, we have
\[
\int_{B_R^c} F(u_n) dx = C(F) \int_{B_R^c} u_n^N dx + o_R(1) =C(F) \lt(\|u_n\|_N^N -\int_{B_R} u_n^N dx \rt) + o_R(1),
\]
uniformly in $n$, hence
\[
\lim_{R\to\infty} \lim_{n\to \infty} \int_{B_R^c} F(u_n) dx =C(F) (a -\|u_0\|_N^N).
\]
Similar in the case $\beta \in (0,1)$, we have
\[
\lim_{n\to \infty} \int_{B_R} F(u_n) dx = \int_{B_R} F(u_0) dx,
\]
for any $R >0$. Hence
\begin{align*}
MT(N,0,F) &= \lim_{n\to\infty} \int_{\R^N} F(u_n) dx\\
&=\lim_{n\to\infty}\lt(\int_{B_R} F(u_n) dx + \int_{B_R^c} F(u_n) dx\rt)\\
&=\int_{B_R} F(u_0) dx + \lim_{n\to\infty}\int_{B_R^c} F(u_n) dx,
\end{align*}
for any $R > 0$. Letting $R\to\infty$, we get
\begin{equation}\label{eq:AA*}
MT(N,0,F) = \int_{\R^N} F(u_0) dx + C(F)(a -\|u_0\|_N^N).
\end{equation}
Let $\tau^N = a/\|u_0\|_N^N \geq 1$, and define $v_0(x) = u_0(x/\tau)$. We have $\|v_0\|_N^N = \tau^N \|u_0\|_N^N = a$ and 
\[
\|\na v_0\|_N^N = \|\na u_0\|_N^N \leq \liminf_{n\to \infty} \|\na u_n\|_N^N.
\]
Hence $\|v_0\|_N^N + \|\na v_0\|_N^N \leq 1$, consequently we have
\[
MT(N,0,F)\geq \int_{\R^N} F(v_0) dx = \tau^N \int_{\R^N} F(u_0) dx.
\]
The preceding estimate and \eqref{eq:AA*} imply
\[
(\tau^N -1) \lt(\int_{\R^N} F(u_0) dx - C(F) \|u_0\|_N^N\rt) \leq 0.
\]
If $\tau >1$ then $\int_{\R^N} F(u_0) dx \leq C(F) \|u_0\|_N^N$. This together \eqref{eq:AA*} implies 
\[
MT(N,0,F) \leq C(F) a \leq C(F)
\]
which is impossible by Lemma \ref{lowerboundMT}. Hence $\tau =1$, again by \eqref{eq:AA*} we get
\[
MT(N,0,F) = \int_{\R^N} F(u_0) dx.
\]
This equality together with the strictly increasing monotonicity of $F$ implies that $\|u_0\|_N^N + \|\na u_0\|_N^N =1$, hence $u_0$ is a maximizers for $MT(N,0,F)$.

This finishes the proof of Theorem \ref{Maximizers}.

\section{An auxiliary variational problem}
In this section, we study the variational problem
\begin{equation}\label{eq:variationalp}
\sup\,\{ \|u\|_\infty^{\frac N{N-1}} : u \in S_{N,\beta,a,b}\}.
\end{equation}
Recall that $S_{N,\beta,a,b}$ be the set of functions $u \in W^{1,N}(-\infty,a)$ such that $\lim\limits_{t\to -\infty} u(t) =0$ and 
\[
\int_{-\infty}^a \lt(|u'(t)|^N + \frac1{(1-\beta)^NN^N} |u(t)|^N e^{-\frac{t}{1-\beta}}\rt) dt = b,
\]
where $a,b >0$ are given number. Note that the embedding $S_{N,\beta,a,b} \hookrightarrow L^\infty(-\infty,a)$ is compact, then the supremum in \eqref{eq:variationalp} is attained by a function $u_a \in S_{N,\beta,a,b}$ such that
\begin{equation}\label{eq:maximizers}
\|u_a\|_\infty^{\frac N{N-1}} = \sup \, \{\|u\|_\infty^{\frac N{N-1}}\, :\, u\in S_{N,\beta,a,b}\}.
\end{equation}
In the sequel, we will determine the value of this supremum by identifying the maximizer $u_a$. The natural way to do this is based on the Euler--Lagrange equation associated to \eqref{eq:variationalp}, but we emphasize the difficulty here that the functional $\psi: u \to \|u\|_\infty^{\frac N{N-1}}$ is not differentiable.
However this functional is convex and hence its subdifferential $\pa\psi$ exists. We will briefly recall this notion and derive the Euler--Lagrange equation associated to \eqref{eq:variationalp}. By analyzing this equation, we are able to determine the maximizer $u_a$ above. This strategy was already used in \cite{Ruf2005} which goes back to the earlier paper of De Figueiredo and Ruf \cite{deG91}.

\begin{definition}
Let $E$ be a Banach space and $\psi: E \to \R$ be continuous and convex functional. Then the subdifferential $\pa\psi(u)$ of $\psi$ at $u\in E$ is the subset of the dual space $E'$ characterized by
\[
\mu_u \in \pa\psi(u) \Longleftrightarrow \psi(u+v) - \psi(u) \geq \la \mu_u,v\ra,\qquad\forall\, v\in E,
\]
where $\la\cdot,\cdot\ra$ denotes the duality pairing between $E$ and $E'$. An element $\mu_u\in \pa\psi(u)$ is called a subgradient of $\psi$ at $u$.
\end{definition}
Let us recall the following two lemmas in \cite{Cassani} whose proofs are slight modifications of the proof of Lemma $2.2$, Lemma $2.3$ and Corollary $2.4$ in \cite{deG91}.
\begin{lemma}\label{l1}
Let $\psi: E\to \R$ be a convex and continuous functional. Assume that $\psi \geq 0$ and that for $q\geq 1$ we have $\psi(t x) = t^q \psi(x)$ for any $x\in E$ and $t \geq 0$. Then $\mu_u \in \pa\psi(u)$ if and only if $\la \mu_u,u\ra = q \psi(u)$ and 
\[
\la \mu_u,v\ra \leq \la \mu_u,u\ra,\qquad \forall\, v\in \psi^u :=\{v\in E\, :\, \psi(v) \leq \psi(u)\}.
\]
\end{lemma}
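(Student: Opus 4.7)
The plan is to prove the two directions separately, exploiting $q$-homogeneity via scaling arguments that allow us to probe points along rays from the origin through $u$ and through an arbitrary test point.

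For the forward direction, suppose $\mu_u \in \partial \psi(u)$. To extract $\langle \mu_u, u\rangle = q\psi(u)$, I would use the defining inequality with $v = (t-1)u$ for $t \geq 0$: homogeneity gives $\psi(tu) - \psi(u) = (t^q - 1)\psi(u) \geq (t-1)\langle \mu_u, u\rangle$. Dividing by $(t-1)$ on each side of $t=1$ and letting $t \to 1^+$ and $t \to 1^-$ yields $q\psi(u) \geq \langle \mu_u, u\rangle$ and $q\psi(u) \leq \langle \mu_u, u\rangle$ respectively, so equality holds. The sublevel-set inequality is even easier: for $v \in \psi^u$, the subdifferential inequality reads $\langle \mu_u, v\rangle \leq \langle \mu_u, u\rangle + \psi(v) - \psi(u)$, and the last two terms are nonpositive.

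For the converse, assume both listed conditions. I rewrite the target inequality $\psi(u+w) - \psi(u) \geq \langle \mu_u, w\rangle$ in terms of $z = u+w$: using $\langle \mu_u, u\rangle = q\psi(u)$, it becomes
\[
\psi(z) + (q-1)\psi(u) \geq \langle \mu_u, z\rangle \qquad \text{for all } z \in E.
\]
If $\psi(z) > 0$, scale: choose $t = (\psi(u)/\psi(z))^{1/q} \geq 0$ so that $\psi(tz) = t^q \psi(z) = \psi(u)$, hence $tz \in \psi^u$. The hypothesis then gives $t\langle \mu_u, z\rangle \leq \langle \mu_u, u\rangle = q\psi(u)$, i.e.
\[
\langle \mu_u, z\rangle \leq q\,\psi(u)^{(q-1)/q}\,\psi(z)^{1/q}.
\]
The weighted AM--GM (Young) inequality $a^{(q-1)/q} b^{1/q} \leq \tfrac{q-1}{q} a + \tfrac{1}{q} b$ applied with $a = \psi(u)$, $b = \psi(z)$ yields exactly $\langle \mu_u, z\rangle \leq (q-1)\psi(u) + \psi(z)$, which is what is wanted. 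If instead $\psi(z) = 0$, then $tz \in \psi^u$ for every $t \geq 0$ by homogeneity, so $t \langle \mu_u, z\rangle \leq q\psi(u)$ for all $t>0$, forcing $\langle \mu_u, z\rangle \leq 0 \leq (q-1)\psi(u) + \psi(z)$.

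I expect the main (mild) obstacle to be the converse: one has to guess the correct scaling $t$ that brings $z$ onto the level set $\{\psi = \psi(u)\}$ and then recognize that the resulting inequality is precisely the statement of the weighted AM--GM inequality with exponents $\tfrac{q-1}{q}$ and $\tfrac{1}{q}$. The degenerate case $\psi(z)=0$ must be handled separately since no positive rescaling brings it to the level $\psi(u)$, but sending $t\to\infty$ in the homogeneity argument does the job.
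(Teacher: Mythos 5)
The paper does not actually prove Lemma~\ref{l1}; it merely cites \cite{Cassani} and \cite{deG91}, so there is no internal proof to compare against. Your argument is correct and is the natural/expected one for this kind of homogeneity lemma: probe along the ray $tu$ to extract $\langle\mu_u,u\rangle=q\psi(u)$, use the subdifferential inequality directly for the sublevel estimate, and in the converse rescale an arbitrary $z$ onto the level set $\{\psi=\psi(u)\}$ and close with weighted AM--GM. One small point worth flagging: in the converse, when $\psi(z)>0$ your scaling $t=(\psi(u)/\psi(z))^{1/q}$ degenerates to $t=0$ if $\psi(u)=0$, and then the hypothesis yields nothing. In fact the ``if'' direction of the lemma genuinely fails at $\psi(u)=0$ (e.g.\ $E=\R^2$, $\psi(x,y)=|x|^q$, $u=(0,1)$: the stated conditions impose no constraint on the first component of $\mu_u$, yet $\pa\psi(u)=\{0\}$). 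So the lemma carries an implicit assumption $\psi(u)>0$, which holds in the paper's application since there $\psi(w)=\|w\|_\infty^N$ with $w$ a nontrivial maximizer; your proof is complete under that standing assumption, and you might state it explicitly.
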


\begin{lemma}\label{l2}
Let $\phi \in C^1(E;\R)$ be a functional such that 
\[
\la \phi'(v),v\ra = q \phi(v) \not=0, \qquad\forall\, v\in E\setminus\{0\},
\]
and let $\psi : E \to \R$ be a functional satisfying the hypotheses of Lemma \ref{l1}. If $w \in E$ is such that for some $A >0$,
\[
\psi(w) = \sup_{u\in E, \, \phi(u) =1} \psi(u) = \frac1A,
\]
then
\[
\phi'(w) \in \frac1{\psi(w)} \pa\psi(w) = A \pa \psi(w).
\]
\end{lemma}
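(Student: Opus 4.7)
\emph{Proof proposal.} The plan is to translate the maximality condition into a global pointwise comparison between $\phi$ and $A\psi$ using the joint $q$-homogeneity of the two functionals, and then extract the subgradient relation by perturbing around $w$. First, note that the assumption $\la\phi'(v),v\ra = q\phi(v) \neq 0$ on $E\setminus\{0\}$ is Euler's identity, and integrating along rays shows that $\phi$ is (positively) $q$-homogeneous of constant sign on $E\setminus\{0\}$; one may assume $\phi > 0$ there. For any $u \neq 0$, the rescaled vector $\tilde u := u/\phi(u)^{1/q}$ lies on the constraint $\{\phi = 1\}$, so maximality of $w$ together with the $q$-homogeneity of $\psi$ yields
\[
\frac{\psi(u)}{\phi(u)} = \psi(\tilde u) \leq \psi(w) = \frac{1}{A}.
\]
Rearranging gives the global inequality $\phi(u) \geq A\psi(u)$ on $E$, with equality at $u = w$. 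In particular, $w$ is a global minimum of $\phi - A\psi$ with minimum value zero.

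The next step is to perturb this inequality. For arbitrary $h \in E$ and $\epsilon > 0$, evaluating at $v = w + \epsilon h$ and using the $C^1$-Taylor expansion $\phi(w + \epsilon h) = 1 + \epsilon\la\phi'(w), h\ra + o(\epsilon)$ gives
\[
\frac{\psi(w + \epsilon h) - \psi(w)}{\epsilon} \leq \la A^{-1}\phi'(w), h\ra + o(1).
\]
Since $\psi$ is convex, the difference quotient on the left is nonincreasing as $\epsilon \downarrow 0$ with limit the right directional derivative $\psi'(w; h)$; in particular $\psi'(w; h) \leq \la A^{-1}\phi'(w), h\ra$ for every $h$. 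Replacing $h$ by $-h$ gives the symmetric bound $\psi'(w; -h) \leq -\la A^{-1}\phi'(w), h\ra$, and the subadditivity relation $\psi'(w; h) + \psi'(w; -h) \geq \psi'(w; 0) = 0$, valid for any convex function, forces equality in both estimates, so
\[
\psi'(w; h) = \la A^{-1}\phi'(w), h\ra \qquad \text{for all } h \in E.
\]

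Finally, the monotonicity of the map $t \mapsto [\psi(w + th) - \psi(w)]/t$ for convex $\psi$, evaluated at $t = 1$, gives $\psi(w + h) - \psi(w) \geq \psi'(w; h) = \la A^{-1}\phi'(w), h\ra$ for every $h \in E$, which is precisely the defining inequality of $A^{-1}\phi'(w) \in \pa\psi(w)$, equivalently $\phi'(w) \in A\pa\psi(w) = \psi(w)^{-1}\pa\psi(w)$, as required. The main subtlety is that direct perturbation of $\phi \geq A\psi$ alone only produces the one-sided bound $\psi'(w;h) \leq \la A^{-1}\phi'(w), h\ra$, which is the \emph{wrong} direction for a subgradient; the convexity of $\psi$, entering through the subadditivity of its one-sided directional derivatives, is exactly what promotes this one-sided estimate to the required equality (and, as a byproduct, shows that $\psi$ is in fact Gateaux differentiable at the maximizer $w$).
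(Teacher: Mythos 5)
Your proof is correct. The paper does not actually supply a proof of this lemma; it only refers to Cassani--Ruf--Tarsi and to de~Figueiredo--Ruf for ``slight modifications'' of their arguments, so there is no in-paper derivation to compare against line by line. Your route is sound and complete: Euler's identity plus the nonvanishing of $\phi$ on $E\setminus\{0\}$ (and connectedness of $E\setminus\{0\}$, which holds since $E$ is infinite-dimensional in the application) gives $q$-homogeneity and a constant sign for $\phi$, which you may take positive because the constraint set $\{\phi=1\}$ is nonempty; the joint $q$-homogeneity of $\phi$ and $\psi$ then converts the constrained maximality of $w$ into the unconstrained pointwise bound $\phi \geq A\psi$ with equality at $w$. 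The remaining step is handled exactly right: the $C^1$ expansion of $\phi$ at $w$ yields only the one-sided estimate $\psi'(w;h) \leq \la A^{-1}\phi'(w),h\ra$, which by itself is useless for exhibiting a subgradient, and your invocation of the subadditivity $\psi'(w;h)+\psi'(w;-h)\geq 0$ of the one-sided directional derivative of a convex function is precisely the correct device to upgrade it to the equality $\psi'(w;h)=\la A^{-1}\phi'(w),h\ra$, after which the monotone-difference-quotient inequality $\psi(w+h)-\psi(w)\geq\psi'(w;h)$ closes the argument. One remark on presentation: your argument never explicitly invokes Lemma~\ref{l1}, whereas the framework of de~Figueiredo--Ruf is built around that characterization of $\partial\psi(w)$ (checking $\la\mu_w,w\ra=q\psi(w)$ via Euler's identity and then verifying the monotonicity condition on the sublevel set of $\psi$); your version bypasses Lemma~\ref{l1} and verifies the definition of the subdifferential directly. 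Both reach the same conclusion, and your route has the virtue of isolating the global inequality $\phi\geq A\psi$, which makes the Lagrange-multiplier mechanism transparent and also reveals, as you note, that $\psi$ is Gateaux differentiable at the maximizer $w$.
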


We next apply Lemma \ref{l1} and Lemma \ref{l2} to 
\[
E = \{u\in W^{1,N}(-\infty,a): \lim_{t\to\ -\infty} u(t) =0\},
\]
\[
\phi(u) =  \int_{-\infty}^a \lt(|u'(t)|^N + \frac1{(1-\beta)^NN^N} |u(t)|^N e^{-\frac{t}{1-\beta}}\rt) dt,
\]
and
\[
\psi(u) =  \|u\|_\infty^N,
\]
and obtain the following result.
\begin{proposition}\label{p1}
Suppose that $w \in E$ satisfies
\[
\psi(w) = \sup\, \{\psi(u)\, :\, u\in E,\, \phi(u) =b\},
\]
then $w$ satisfies (weakly) the equation
\begin{equation}\label{eq:E-L1}
(-|w'(t)|^{N-2}w'(t))' + \frac1{(1-\beta)^N N^N} |w(t)|^{N-2} w(t) e^{-\frac t{1-\beta}} = \frac{b}{N\|w\|_\infty^N}\, \mu_w,
\end{equation}
where $\mu_w \in \pa\psi(w) \subset E'$.
\end{proposition}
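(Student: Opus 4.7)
The plan is to apply Lemma \ref{l2}, which is purpose-built to handle exactly this kind of constrained maximization in which the objective $\psi(u)=\|u\|_\infty^N$ fails to be Fréchet differentiable and only a convex subdifferential is available. The Lagrange-multiplier identity it delivers will directly produce the Euler--Lagrange equation \eqref{eq:E-L1} once $\phi'$ is read off in weak form. The main conceptual obstacle is precisely the non-smoothness of $\psi$; everything else is homogeneity bookkeeping.

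First I would check the hypotheses of Lemma \ref{l2}. The functional $\phi$ is plainly $C^1$ on $E$, with Gateaux derivative
\[
\langle \phi'(u),v\rangle = N\int_{-\infty}^a \Bigl(|u'|^{N-2}u'v' + \frac{|u|^{N-2}u\, v}{(1-\beta)^N N^N}\, e^{-t/(1-\beta)}\Bigr)\,dt,
\]
and since $\phi$ is homogeneous of degree $N$, the Euler identity $\langle \phi'(v),v\rangle = N\phi(v)$ holds; moreover $\phi(v)>0$ for every $v\in E\setminus\{0\}$, because $\phi(v)=0$ would force $v'\equiv 0$ and hence $v\equiv 0$ (using $\lim_{t\to-\infty}v(t)=0$). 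On the other side, $\psi(u)=\|u\|_\infty^N$ is continuous on $E$ via the compact embedding $E\hookrightarrow L^\infty(-\infty,a)$, is nonnegative, is $N$-homogeneous, and is convex as the $N$-th power of a seminorm (with $N\ge 2$); thus $\psi$ satisfies the hypotheses of Lemma \ref{l1}.

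To apply Lemma \ref{l2}, whose statement normalizes the constraint to $\phi(u)=1$, I would rescale. Set $\tilde w = b^{-1/N}w$; the $N$-homogeneity of $\phi$ and $\psi$ gives $\phi(\tilde w)=1$ and $\psi(\tilde w)=\psi(w)/b$, and $\tilde w$ maximizes $\psi$ on $\{\phi=1\}$. Lemma \ref{l2} then yields a subgradient $\tilde\mu\in\partial\psi(\tilde w)$ with $\phi'(\tilde w) = \psi(\tilde w)^{-1}\,\tilde\mu$. The scaling rules $\phi'(cu)=c^{N-1}\phi'(u)$ and $\partial\psi(cu)=c^{N-1}\partial\psi(u)$ for $c>0$ follow immediately from $N$-homogeneity (for the second: $\psi(cu+cv)-\psi(cu)=c^N(\psi(u+v)-\psi(u))$, so $\mu\in\partial\psi(u)\Leftrightarrow c^{N-1}\mu\in\partial\psi(cu)$). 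Applying them with $c=b^{1/N}$ converts the normalized relation into
\[
\phi'(w)=\frac{b}{\psi(w)}\,\mu_w,\qquad \mu_w\in\partial\psi(w).
\]

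Finally, I would divide by $N$ and recognize the left-hand side as the weak form of $-(|w'|^{N-2}w')' + (1-\beta)^{-N}N^{-N}|w|^{N-2}w\,e^{-t/(1-\beta)}$ tested against arbitrary $v\in E$; this yields exactly \eqref{eq:E-L1}, with the natural boundary condition $|w'(a)|^{N-2}w'(a)=0$ absorbed into the weak formulation (since test functions in $E$ are unconstrained at $t=a$). The only steps requiring any care are the two homogeneity identities for $\phi'$ and $\partial\psi$, which are routine once stated explicitly.
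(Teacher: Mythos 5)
Your proof correctly instantiates Lemmas \ref{l1} and \ref{l2}, which is exactly how the paper justifies Proposition \ref{p1} (the paper states the application without writing out the details). The verifications you supply --- $\phi\in C^1(E)$, the Euler identity $\langle\phi'(v),v\rangle=N\phi(v)>0$ for $v\neq 0$, convexity and $N$-homogeneity of $\psi=\|\cdot\|_\infty^N$, the rescaling $\tilde w=b^{-1/N}w$ to reduce to the normalization $\phi=1$, and the scaling rules $\phi'(cu)=c^{N-1}\phi'(u)$, $\partial\psi(cu)=c^{N-1}\partial\psi(u)$ --- are precisely what is needed to pass from Lemma \ref{l2} (stated with $\phi(u)=1$) to the claimed identity $\phi'(w)=\frac{b}{\psi(w)}\mu_w$ in $E'$, and dividing by $N$ yields \eqref{eq:E-L1}.

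One small misstep in your closing aside: the natural boundary condition is \emph{not} $|w'(a)|^{N-2}w'(a)=0$. Since $\mu_w$ may (and by Proposition \ref{p2} does) contain a Dirac mass at $t=a$, namely $\mu_w=N\|w\|_\infty^{N-1}\delta_a$, integrating the weak form by parts against a test function $v\in E$ with $v(a)\neq 0$ gives the boundary identity $|w'(a)|^{N-2}w'(a)=\frac{b}{\|w\|_\infty}$, not zero. This does not affect the validity of Proposition \ref{p1} itself, where $\mu_w$ is left abstract and \eqref{eq:E-L1} is simply the equality $\frac1N\phi'(w)=\frac{b}{N\|w\|_\infty^N}\mu_w$ in $E'$; but the remark as written would mislead when you subsequently pin down $w$.
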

Note that we can assume that $w$ is nonnegative. It remains to determine the subgradient $\mu_w$ in equation \eqref{eq:E-L1}. Following \cite[Lemmas $2.6-2.8$]{deG91}, we have the following proposition. 
\begin{proposition}\label{p2}
Let $K_w =\{t\in (-\infty,a]\,:\, w(t) = \|w\|_\infty\}$. Then
\begin{description}
\item (i) $\text{\rm supp}\, \mu_w \subset K_w$.
\item (ii) $K_w =\{a\}$.
\item (iii) $\mu_w = N\|w\|_\infty^{N-1} \de_a$ where $\de_a$ is Dirac function at $a$.
\end{description}
\end{proposition}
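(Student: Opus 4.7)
\textbf{Proof plan for Proposition~\ref{p2}.} The three assertions should be handled in order, with almost all of the work in (ii); assertion (i) is a generality about subdifferentials of $\|\cdot\|_\infty^N$, and (iii) follows by a short computation. For (i), the key point is that $\psi(u)=\|u\|_\infty^N$ is locally constant outside the max set. If $t_0 \notin K_w$, pick a neighborhood $U$ of $t_0$ on which $w \le \|w\|_\infty - \eta$ for some $\eta>0$. For any $v\in E$ supported in $U$ with $\|v\|_\infty$ small enough, $\|w+v\|_\infty = \|w\|_\infty$ (the sup is realized outside $U$), so $\psi(w+v) = \psi(w)$. The subgradient inequality applied to both $v$ and $-v$ forces $\langle\mu_w,v\rangle=0$; by homogeneity this extends to all $v$ supported in $U$, so $\mu_w\equiv 0$ on $U$, proving $\mathrm{supp}\,\mu_w\subset K_w$.

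For (ii), I would first show $a\in K_w$. Assume not; since $w$ is continuous on $(-\infty,a]$ with $w(-\infty)=0$ and $\|w\|_\infty>0$, the set $K_w$ is compact, so $t^\ast := \max K_w < a$. Set $s := a-t^\ast>0$ and $\tilde w(t) := w(t-s)$ on $(-\infty,a]$. Then $\tilde w\in E$, $\|\tilde w\|_\infty=\|w\|_\infty$, the gradient integral does not increase (the effective domain shrinks from $(-\infty,a)$ to $(-\infty,a-s)$), and the weighted $L^N$ term is bounded by a factor $e^{-s/(1-\beta)}<1$ times the original. Hence $\phi(\tilde w)<b$, and rescaling $\hat w=\lambda\tilde w$ with $\lambda^N=b/\phi(\tilde w)>1$ yields $\hat w\in S_{N,\beta,a,b}$ with $\psi(\hat w)=\lambda^N\|w\|_\infty^N>\psi(w)$, contradicting optimality. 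Next I rule out any other point of $K_w$: suppose $t_1\in K_w\cap(-\infty,a)$ and choose $t_1$ maximal. By (i), $\mu_w\equiv 0$ on $(t_1,a)$, so \eqref{eq:E-L1} reduces there to
\[
(|w'|^{N-2}w')'(t) = \frac{w(t)^{N-1}}{(1-\beta)^N N^N}\,e^{-t/(1-\beta)}.
\]
Standard $C^1$-regularity for the $N$-Laplacian gives $w'(t_1)=0$, and since $w(t_1)=\|w\|_\infty>0$ the right-hand side is strictly positive on $(t_1,t_1+\varepsilon)$. So $|w'|^{N-2}w'$ strictly increases from $0$, forcing $w'>0$ throughout $(t_1,a)$ and hence $w(a)>w(t_1)=\|w\|_\infty$, a contradiction. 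Thus $K_w=\{a\}$.

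Finally, (iii) is immediate: by (i) and (ii), $\mu_w=\lambda\delta_a$ for some $\lambda\in\R$; Lemma~\ref{l1} applied with $q=N$ gives $\lambda\|w\|_\infty = \langle\mu_w,w\rangle = N\psi(w) = N\|w\|_\infty^N$, so $\lambda=N\|w\|_\infty^{N-1}$. The main technical hurdle is the translation step proving $a\in K_w$: one must verify $\tilde w$ is genuinely admissible and that the drop $\phi(\tilde w)<\phi(w)$ is strict — the exponential weight $e^{-s/(1-\beta)}$ guarantees strict inequality regardless of whether the gradient term actually decreases.
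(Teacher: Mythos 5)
Your handling of (i) and (iii) is correct, and the translation-plus-rescaling argument you use to show $a\in K_w$ is exactly the right mechanism; note that the paper itself gives no in-text proof for this proposition and simply cites Lemmas~2.6--2.8 of \cite{deG91}, so there is no detailed argument to compare against.

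There is, however, a genuine gap in the second half of (ii). You assert that ``standard $C^1$-regularity for the $N$-Laplacian gives $w'(t_1)=0$,'' but assertion (i) only forces $\mu_w$ to vanish \emph{off} $K_w$; a point mass of $\mu_w$ at $t_1\in K_w$ is not excluded, and such a mass produces a jump in $|w'|^{N-2}w'$ across $t_1$, so $w'(t_1^+)$ may well be strictly negative. In that scenario $|w'|^{N-2}w'$ is strictly increasing on $(t_1,a)$ but starts negative, crosses zero at some interior point, $w$ dips below $\|w\|_\infty$ and then climbs back to $w(a)=\|w\|_\infty$; no contradiction of the form $w(a)>\|w\|_\infty$ is reached and the argument breaks. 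The repair is immediate and uses only what you already wrote: the translation argument proving $a\in K_w$ in fact kills any $t_1\in K_w$ with $t_1<a$. Since $t_1\in K_w$ one has $\sup_{\tau\le t_1}w(\tau)=\|w\|_\infty$, so $\tilde w(t)=w(t-s)$ with $s=a-t_1>0$ again lies in $E$ with $\|\tilde w\|_\infty=\|w\|_\infty$, the gradient part of $\phi$ does not increase, and the weighted $L^N$ part is multiplied by $e^{-s/(1-\beta)}<1$ and is strictly smaller because $w(t_1)=\|w\|_\infty>0$ forces $\int_{-\infty}^{t_1}w^N e^{-\tau/(1-\beta)}\,d\tau>0$. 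Hence $\phi(\tilde w)<b$, and rescaling contradicts optimality. With this observation the ODE step can be dropped entirely, and $K_w=\{a\}$ follows from the translation argument alone.
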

Thus, equation \eqref{eq:E-L1} becomes
\begin{equation}\label{eq:EL}
\begin{cases}
(-|w'(t)|^{N-2} w'(t))' + \frac1{(1-\beta)^N N^N} w(t)^{N-1} e^{-\frac t{1-\beta}} = \frac{b}{N\|w\|_\infty^N}\, \mu_w,&\mbox{$-\infty < t\leq a,$}\\
\lim\limits_{t\to -\infty} w(t) = 0.
\end{cases}
\end{equation}
By this, we conclude that
\begin{equation}\label{eq:EL1}
\begin{cases}
(-|w'(t)|^{N-2} w'(t))' + \frac1{(1-\beta)^N N^N} w(t)^{N-1} e^{-\frac t{1-\beta}} = 0,&\mbox{$-\infty < t< a,$}\\
\lim\limits_{t\to -\infty} w(t) = 0,
\end{cases}
\end{equation}
with the condition that
\begin{equation}\label{eq:EL2}
\int_{-\infty}^a \lt(|w'(t)|^N dt + \frac1{(1-\beta)^N N^N} w(t)^N e^{-\frac t{1-\beta}}\rt) dt = b.
\end{equation}
Applying elliptic estimates, we have $w \in C^{1,\al}((-\infty,a])$ for some $0< \al \leq 1$. Integrating equation \eqref{eq:EL1} on $(-\infty,t)$ with $t < a$, we get
\[
|w'(t)|^{N-2} w'(t) = \frac1{(1-\beta)^N N^N} w(t)^N\int_{-\infty}^t w(s)^{N} e^{-\frac s{1-\beta}} ds.
\]
This shows that $w'$ is positive and increasing.


It is easy to see that
\[
w(t) = \gamma(a,b) G(e^{-\frac t{N(1-\beta)}}),\qquad t\leq a,
\]
where $\gamma(a,b)$ is constant determined by \eqref{eq:EL2}.
\begin{lemma}\label{gamma}
It holds
\[
\gamma(a,b)^N =\lt(N(1-\beta)\rt)^{N-1} \frac{b}{\lt(-G'(e^{-\frac{a}{N(1-\beta)}}) e^{-\frac{a}{N(1-\beta)}}\rt)^{N-1} G(e^{-\frac{a}{N(1-\beta)}})}.
\]
Consequently, we get
\[
\|w\|_\infty^{\frac N{N-1}} = w(a)^{\frac N{N-1}} = b^{\frac1{N-1}} a + (1-\beta)\al_N A_0 b^{\frac1{N-1}} + O(e^{-\frac a{N(1-\beta)}} a^N).
\]
\end{lemma}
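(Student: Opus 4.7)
The plan is in two steps: (i) determine $\gamma(a,b)$ by inserting the explicit ansatz $w(t) = \gamma(a,b)\, G(e^{-t/(N(1-\beta))})$ into the normalization constraint \eqref{eq:EL2}, and (ii) extract the asymptotics of $w(a)^{N/(N-1)}$ from the expansions \eqref{eq:formG} and \eqref{eq:formG'} of $G$ near the origin.

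For step (i) I would change variables via $r = e^{-t/(N(1-\beta))}$, which maps $(-\infty, a]$ onto $[r_a, \infty)$ with $r_a := e^{-a/(N(1-\beta))}$ and $dt = -N(1-\beta)\, r^{-1}\, dr$. A direct computation gives $w'(t) = \gamma(a,b)\, r\, (-G'(r))/(N(1-\beta))$ and $e^{-t/(1-\beta)} = r^N$, so \eqref{eq:EL2} becomes
\[
\frac{\gamma(a,b)^N}{\bigl(N(1-\beta)\bigr)^{N-1}} \int_{r_a}^\infty \bigl(|G'(r)|^N + G(r)^N\bigr)\, r^{N-1}\, dr = b.
\]
To evaluate the remaining radial integral, I would invoke the radial form of \eqref{eq:Green} on $(0,\infty)$, namely $-(r^{N-1} |G'|^{N-2} G')' + r^{N-1} G^{N-1} = 0$. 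Multiplying by $G$, integrating from $r_a$ to $\infty$, and integrating by parts should yield
\[
\int_{r_a}^\infty \bigl(|G'(r)|^N + G(r)^N\bigr)\, r^{N-1}\, dr = r_a^{N-1}\, \bigl(-G'(r_a)\bigr)^{N-1}\, G(r_a).
\]
The subtlest point here, and the main obstacle in the proof, is justifying that the boundary term $r^{N-1} |G'|^{N-2} G' G$ vanishes at $r = \infty$; this follows from the standard decay of $G$ and $|G'|$ at infinity for entire solutions of \eqref{eq:Green}, implicit in $G \in L^N(\mathbb R^N) \cap W^{1,N}_{\mathrm{loc}}(\mathbb R^N \setminus \{0\})$. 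Rearranging then produces the closed form for $\gamma(a,b)^N$ stated in the lemma.

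For step (ii), combining $w(a) = \gamma(a,b)\, G(r_a)$ with the closed form just obtained yields
\[
w(a)^{N/(N-1)} = \frac{N(1-\beta)\, b^{1/(N-1)}\, G(r_a)}{r_a\,(-G'(r_a))}.
\]
It remains to substitute the expansions \eqref{eq:formG} and \eqref{eq:formG'} at $r = r_a$. Since $-\ln r_a = a/(N(1-\beta))$, one reads off $G(r_a) = a/(\alpha_N(1-\beta)) + A_0 + O(r_a^N a^{N-1})$ and $r_a\,(-G'(r_a)) = N/\alpha_N + O(r_a^N a^{N-1})$. Plugging in and simplifying via $\alpha_{\beta,N} = (1-\beta)\alpha_N$ produces the leading term $b^{1/(N-1)} a$ followed by $(1-\beta)\alpha_N A_0\, b^{1/(N-1)}$, with residual $O(r_a^N a^N) = O(e^{-a/(1-\beta)} a^N)$, which is comfortably dominated by the announced $O(e^{-a/(N(1-\beta))} a^N)$.
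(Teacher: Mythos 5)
Your proposal is correct and follows essentially the same route as the paper: substitute the ansatz $w(t)=\gamma(a,b)\,G(e^{-t/(N(1-\beta))})$ into the constraint, reduce to the radial integral, and then read off the asymptotics from \eqref{eq:formG} and \eqref{eq:formG'}. You actually supply a step the paper leaves implicit, namely the integration-by-parts identity $\int_{r_a}^\infty(|G'|^N+G^N)r^{N-1}\,dr=r_a^{N-1}(-G'(r_a))^{N-1}G(r_a)$ coming from the radial form of \eqref{eq:Green}, together with the (standard, exponential-decay) justification of the vanishing boundary term at infinity.
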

\begin{proof}
By an easy computation, we have
\[
w'(t) = -\gamma(a,b)\frac1{N(1-\beta)}G'(e^{-\frac{t}{N(1-\beta)}}) e^{-\frac{t}{N(1-\beta)}}.
\]
Thus
\begin{align*}
b&= \frac{\gamma(a,b)^N}{(N(1-\beta))^N} \int_{-\infty}^a \lt(|G'(e^{-\frac{t}{N(1-\beta)}})|^N + |G(e^{-\frac{t}{N(1-\beta)}})|^N\rt) e^{-\frac t{1-\beta}} dt\\
&=\frac{\gamma(a,b)^N}{(N(1-\beta))^{N-1}}(-G'(e^{-\frac{a}{N(1-\beta)}})e^{-\frac{a}{N(1-\beta)}})^{N-1} G(e^{-\frac{a}{N(1-\beta)}}),
\end{align*}
which implies the first conclusion.

Replacing the value of $\gamma(a,b)$ in to $w$, we obtain
\begin{align*}
\|w\|_\infty^{\frac N{N-1}} = w(a)^{\frac N{N-1}} &=N(1-\beta) b^{\frac1{N-1}} \frac{G(e^{-\frac{a}{N(1-\beta)}})}{-G'(e^{-\frac{a}{N(1-\beta)}})e^{-\frac{a}{N(1-\beta)}}}\\
&=b^{\frac1{N-1}}a + (1-\beta)\alpha_N A_0 b^{\frac1{N-1}} + O(e^{-\frac a{N(1-\beta)}} a^N),
\end{align*}
here we use the asymptotic behaviors of $G$ and $G'$ near origin (see \eqref{eq:formG} and \eqref{eq:formG'}).
\end{proof}

\section*{Acknowledgments}
This work is supported by the CIMI's postdoctoral research fellowship.

\end{document}